\documentclass{amsart}
\usepackage{amssymb,amscd,amsfonts,amsbsy,enumerate}
\usepackage{latexsym}
\usepackage{exscale}
\usepackage{amsmath,amsthm,amsfonts}
\usepackage{mathrsfs}

\year=2013 \month=03 \day=22

\usepackage[colorlinks=true, pdfstartview=FitV, linkcolor=blue, citecolor=red, urlcolor=blue]{hyperref}

\usepackage{color}

\numberwithin{equation}{section}

\allowdisplaybreaks

\newtheorem{theorem}{Theorem}[section]
\newtheorem{lemma}[theorem]{Lemma}

\newtheorem{proposition}[theorem]{Proposition}
\newtheorem{definition}[theorem]{Definition}
\newtheorem{convention}[theorem]{Convention}

\theoremstyle{remark}
\newtheorem{remark}[theorem]{Remark}


\begin{document}
\allowdisplaybreaks

\title[The higher order Dirichlet problem in the upper-half space]
{The higher order regularity Dirichlet problem for
elliptic systems in the upper-half space}

\author{Jos\'e Mar{\'\i}a Martell}
\address{Jos\'e Mar{\'\i}a Martell
\\
Instituto de Ciencias Matem\'aticas CSIC-UAM-UC3M-UCM
\\
Consejo Superior de Investigaciones Cient{\'\i}ficas
\\
C/ Nicol\'as Cabrera, 13-15
\\
E-28049 Madrid, Spain} \email{chema.martell@icmat.es}

\author{Dorina Mitrea}
\address{Dorina Mitrea
\\
Department of Mathematics
\\
University of Missouri
\\
Columbia, MO 65211, USA} \email{mitread@missouri.edu}

\author{Irina Mitrea}
\address{Irina Mitrea
\\
Department of Mathematics
\\
Temple University\!
\\
1805\,N.\,Broad\,Street
\\
Philadelphia, PA 19122, USA} \email{imitrea@temple.edu}

\author{Marius Mitrea}
\address{Marius Mitrea
\\
Department of Mathematics
\\
University of Missouri
\\
Columbia, MO 65211, USA} \email{mitream@missouri.edu}

\thanks{The first author has been supported in part by MINECO Grant MTM2010-16518 and ICMAT Severo Ochoa project SEV-2011-0087,
the second author has been supported in part by a Simons Foundation grant 200750
and by a University of Missouri research leave, the third author has been supported
in part by US NSF grant 0547944. The fourth author has been supported in part by the Simons Foundation grant 281566.
This work has been possible thanks to the support and hospitality
of \textit{Temple University} (USA), \textit{ICMAT, Consejo Superior de Investigaciones Cient{\'\i}ficas} (Spain) and the \textit{Universidad Aut\'onoma de Madrid} (Spain). The authors express their gratitude to these institutions.}

\date{October 30, 2012. \textit{Revised}: \today}

\subjclass[2010]{Primary: 35B65, 35J45, 35J57. Secondary: 35C15, 74B05, 74G05.}

\keywords{Higher order Dirichlet problem, nontangential maximal function, second order
elliptic system, Poisson kernel, Lam\'e system}

\begin{abstract}
We identify a large class of constant (complex) coefficient, second order elliptic
systems for which the Dirichlet problem in the upper-half space with data in
$L^p$-based Sobolev spaces, $1<p<\infty$, of arbitrary smoothness $\ell$, is well-posed
in the class of functions whose nontangential maximal operator of their derivatives
up to, and including, order $\ell$ is $L^p$-integrable. This class includes all
scalar, complex coefficient elliptic operators of second order, as well as the
Lam\'e system of elasticity, among others.
\end{abstract}

\maketitle

%

\allowdisplaybreaks

\section{Introduction}
\setcounter{equation}{0}
\label{S-1}

Let $M$ be a fixed positive integer and consider the second-order, $M\times M$ system,
with constant complex coefficients, written as
\begin{equation}\label{L-def}
Lu:=\Bigl(\partial_r(a^{\alpha\beta}_{rs}\partial_s u_\beta)
\Bigr)_{1\leq\alpha\leq M}
\end{equation}
when acting on a ${\mathscr{C}}^2$ vector valued function $u=(u_\beta)_{1\leq\beta\leq M}$.
A standing assumption for this paper is that $L$ is {\tt elliptic}, in the sense that
there exists a real number $\kappa_o>0$ such that the following Legendre-Hadamard
condition is satisfied (here and elsewhere, the usual convention of summation
over repeated indices is used)
\begin{equation}\label{L-ell.X}
\begin{array}{c}
{\rm Re}\,\bigl[a^{\alpha\beta}_{rs}\xi_r\xi_s\overline{\eta_\alpha}
\eta_\beta\,\bigr]\geq\kappa_o|\xi|^2|\eta|^2\,\,\mbox{ for every}
\\[8pt]
\xi=(\xi_r)_{1\leq r\leq n}\in{\mathbb{R}}^n\,\,\mbox{ and }\,\,
\eta=(\eta_\alpha)_{1\leq\alpha\leq M}\in{\mathbb{C}}^M.
\end{array}
\end{equation}

The $L^p$-Dirichlet boundary problem associated with the operator $L$ in the upper-half
space is formulated as $Lu=0$ in $\mathbb{R}^{n}_+$,
$u\bigl|_{\partial\mathbb{R}^{n}_{+}}^{{}^{n.t.}}=f\in L^p(\mathbb{R}^{n-1})$,
and ${\mathcal{N}}u\in L^p(\partial\mathbb{R}^{n}_+)$. Here and elsewhere,
$\mathcal{N}$ denotes the nontangential maximal operator,
while $u\bigl|_{\partial\mathbb{R}^{n}_{+}}^{{}^{n.t.}}$ stands for
the non-tangential trace of $u$ onto $\partial{\mathbb{R}}^n_{+}$ (for precise
definitions see \eqref{NT-1} and \eqref{nkc-EE-2}). While in the particular case $L=\Delta$,
the Laplacian in ${\mathbb{R}}^n$, this boundary value problem has been treated at length in
many monographs, including \cite{GCRF85}, \cite{St70}, \cite{Stein93}, to give just a few examples,
much remains to be done.

Here we are interested in identifying a class of elliptic systems $L$ for which the Dirichlet
problem in the upper-half space is well-posed for boundary data belonging to
higher-order smoothness spaces, such as $L^p_\ell({\mathbb{R}}^{n-1})$, the $L^p$-based Sobolev
space in ${\mathbb{R}}^{n-1}$ of order $\ell\in{\mathbb{N}}_0$, with $p\in(1,\infty)$. In such a
scenario, we shall demand that one retains nontangential control of higher-order derivatives
of the solution. More precisely, given any $\ell\in{\mathbb{N}}_0$, we formulate the $\ell$-th
order Dirichlet boundary value problem for $L$ in $\mathbb{R}^{n}_+$ as follows
\begin{equation}\label{Dir-BVP-p:l}
\left\{
\begin{array}{l}
Lu=0\,\,\mbox{ in }\,\,\mathbb{R}^{n}_+\,\,\mbox{ and }\,\,
u\bigl|_{\partial\mathbb{R}^{n}_{+}}^{{}^{n.t.}}=f\in L^p_\ell(\mathbb{R}^{n-1}),
\\[8pt]
{\mathcal{N}}(\nabla^k u)\in L^p(\partial\mathbb{R}^{n}_+)
\,\,\mbox{ for }\,\,k\in\{0,1,...,\ell\},
\end{array}
\right.
\end{equation}
where $\nabla^k u$ denotes the vector with components
$(\partial^\alpha u)_{|\alpha|=k}$. No concrete case of \eqref{Dir-BVP-p:l} has
been dealt with for arbitrary values of the smoothness parameter $\ell$, so
considering even $L=\Delta$ in such a setting is new. In fact, we are able to
treat differential operators that are much more general than the Laplacian,
again, in the context when the boundary data exhibit an arbitrary amount of
regularity, measured on the $L^p$-based Sobolev scale.

In dealing with \eqref{Dir-BVP-p:l}, the starting point is the fact that, as
known from the seminal work of S.\,Agmon, A.\,Douglis, and L.\,Nirenberg in
\cite{ADNI} and \cite{ADNII}, every constant coefficient elliptic operator $L$ has
a Poisson kernel $P^L$, an object whose properties mirror the most basic characteristics
of the classical harmonic Poisson kernel
\begin{eqnarray}\label{Uah-TTT}
P^{\Delta}(x'):=\frac{2}{\omega_{n-1}}\frac{1}{\big(1+|x'|^2\big)^{\frac{n}{2}}}
\qquad\forall\,x'\in{\mathbb{R}}^{n-1},
\end{eqnarray}
where $\omega_{n-1}$ is the area of the unit sphere $S^{n-1}$ in ${\mathbb{R}}^n$.
In particular, using the notation $F_t(x'):=t^{1-n}F(x'/t)$ for each $t>0$ where $F$
is a generic function defined in ${\mathbb{R}}^{n-1}$, we have
\begin{equation}\label{tghn-jan-1}
|P^L_t(x')|\leq C\frac{t}{(t^2+|x'|^2)^{\frac{n}{2}}}
\qquad\forall\,x'\in{\mathbb{R}}^{n-1},\quad\forall\,t>0.
\end{equation}
Then, given any $f\in L^p({\mathbb{R}}^{n-1})$, $1<p<\infty$, if ${\mathcal{M}}$ stands
for the Hardy-Littlewood maximal operator in ${\mathbb{R}}^{n-1}$, the function
\begin{equation}\label{tghn-jan-2}
u(x',t):=\big(P^L_t\ast f\big)(x'),\qquad\forall\,(x',t)\in{\mathbb{R}}^n_{+},
\end{equation}
satisfies $Lu=0$ in ${\mathbb{R}}^n_{+}$ as well as
$u\bigl|_{\partial\mathbb{R}^{n}_{+}}^{{}^{n.t.}}=f$ a.e. in $\mathbb{R}^{n-1}$, and
\begin{equation}\label{tghn-jan-3}
\big({\mathcal{N}}u\big)(x')\leq C\big({\mathcal{M}}f\big)(x'),
\qquad\forall\,x'\in{\mathbb{R}}^{n-1}.
\end{equation}
In turn, the pointwise estimate \eqref{tghn-jan-3} and the boundedness of
${\mathcal{M}}$ on $L^p({\mathbb{R}}^{n-1})$, $1<p<\infty$, can be used to show,
much as in the case for the Laplacian, that $u$ from \eqref{tghn-jan-2} solves
the $L^p$-Dirichlet problem in the upper half-space for any given constant
coefficient elliptic operator $L$. This corresponds to the case $\ell=0$
in \eqref{Dir-BVP-p:l}.

This being said, it is unclear whether the Agmon-Douglis-Nirenberg Poisson kernel
for a generic elliptic operator $L$ continues to work just as well in the setting
when the boundary data is assumed to have higher order regularity. The issue is that,
in this scenario, one is required to estimate the size of the nontangential maximal
operator of iterated gradients of the solution. For such a goal, in order
to make use of the higher order regularity assumption on the boundary data,
one necessarily must find a way of passing generic derivatives inside the
convolution \eqref{tghn-jan-2}, while at the same time allowing kernels,
of an auxiliary nature, to take the role of the original Poisson kernel.
The caveat is that the nontangential maximal function of convolutions with
these auxiliary kernels should have appropriate control, a matter which may
not always be ensured.

To better understand the nature of this difficulty, consider the case of
\eqref{Dir-BVP-p:l} with $\ell=1$, a scenario in which one still looks for
a solution as in \eqref{tghn-jan-2} (keeping in mind that now $f$ belongs to
the Sobolev space $L^p_1({\mathbb{R}}^{n-1})$, $1<p<\infty$). As far as estimating
${\mathcal{N}}\big(\partial_{x_j}u\big)$ is concerned, it is clear from
\eqref{tghn-jan-2} that only the derivative in the normal direction
(i.e., for $\partial_t\equiv\partial_{x_n}$) is potentially problematic.
In the absence of additional information about the nature of the Poisson kernel
$P^L$ one tool that naturally presents itself is a general identity, valid for
any function $F\in{\mathscr{C}}^1({\mathbb{R}}^{n-1})$, to the effect that
\begin{equation}\label{ioaTga-444}
\partial_t\big[F_t(x')\big]=-\sum_{j=1}^{n-1}
\partial_{x_j}\Big[\big(x_jF(x')\big)_t\Big]
\,\,\mbox{ for every }\,\,(x',t)\in{\mathbb{R}}^n_{+}.
\end{equation}
For $u$ as in \eqref{tghn-jan-2}, this permits us to express
\begin{align}\label{ioaTga-555}
\partial_t\big[u(x',t)\big] &=\partial_t\big[\big(P^L_t\ast f\big)(x')\big]
=-\sum_{j=1}^{n-1}\partial_{x_j}\Big[\big(R^{(j)}_t\ast f\big)(x')\Big]
\nonumber\\[0pt]
&=-\sum_{j=1}^{n-1}\Big[R^{(j)}_t\ast\big(\partial_jf\big)\Big](x')
\,\,\mbox{ for every }\,\,(x',t)\in{\mathbb{R}}^n_{+},
\end{align}
where the auxiliary kernels $R^{(j)}$, $1\leq j\leq n-1$, are given by
\begin{equation}\label{ioaTga-666}
R^{(j)}(x'):=x_j P^L(x'),\,\,\mbox{ for every }\,\,x'\in{\mathbb{R}}^{n-1}.
\end{equation}
Superficially, the terms in the right-most side of \eqref{ioaTga-555} appear to
have the same type of structure as the original function $u$ in \eqref{tghn-jan-2}
(since $\partial_j f\in L^p({\mathbb{R}}^{n-1})$), which raises the prospect of
handling them as in \eqref{tghn-jan-3}. However, such optimism is not justified
since the auxiliary kernels $R^{(j)}$ have a fundamentally different behavior at
infinity than the original $P^L$. Concretely, in place of \eqref{tghn-jan-1} we
now have
\begin{equation}\label{tghn-jan-1B}
\big|R^{(j)}_t(x')\big|\leq C\frac{|x_j|}{(t^2+|x'|^2)^{\frac{n}{2}}},
\qquad\forall\,x'\in{\mathbb{R}}^{n-1},\quad\forall\,t>0.
\end{equation}
In particular, $R^{(j)}_t(x')$ only decays as $|x'|^{1-n}$ at infinity, for each
$t>0$ fixed, so the analogue of \eqref{tghn-jan-3} in this case, i.e., the pointwise estimate
\begin{equation}\label{tghaa-1D}
{\mathcal{N}}\big(\partial_t u\big)\leq C{\mathcal{M}}(\nabla'f)
\quad\mbox{in}\quad{\mathbb{R}}^{n-1},
\end{equation}
where $\nabla'$ denotes the gradient in ${\mathbb{R}}^{n-1}$, is rendered hopeless.
This being said, the usual technology used in the proof of Cotlar's inequality may
be employed to show that in place of \eqref{tghaa-1D} one nonetheless has
\begin{equation}\label{tghn-PPP}
{\mathcal{N}}\big(\partial_t u\big)\leq C\sum_{j=1}^{n-1}
T^{(j)}_\star (\partial_j f)+C{\mathcal{M}}(\nabla'f)
\quad\mbox{in}\quad{\mathbb{R}}^{n-1},
\end{equation}
where $T^{(j)}_\star$ is the maximal singular integral operator acting on a generic
function $g$ defined in ${\mathbb{R}}^{n-1}$ according to
\begin{equation}\label{tghn-PPP.2}
T^{(j)}_\star g(x'):=\sup_{\varepsilon>0}\left|\int_{|x'-y'|>\varepsilon}k_j(x'-y')g(y')\,dy'
\right|,\qquad x'\in{\mathbb{R}}^{n-1},
\end{equation}
where the kernel $k_j$ is given by
\begin{equation}\label{tghn-PPP.3}
k_j(x'):=x_j\partial_t\big[P^L_t(x')\big]\Big|_{t=0},\qquad x'\in{\mathbb{R}}^{n-1}\setminus\{0'\}.
\end{equation}
In concert with the fact that each $k_j$ has the right amount of regularity and homogeneity, i.e.,
\begin{equation}\label{tghn-PPP.4}
\begin{array}{c}
k_j\in{\mathscr{C}}^\infty({\mathbb{R}}^{n-1}\setminus\{0'\}),\quad
k_j(\lambda x')=\lambda^{1-n}k_j(x')
\\[5pt]
\mbox{for every }\,\,\lambda>0\,\,\mbox{ and every }\,\,
x'\in{\mathbb{R}}^{n-1}\setminus\{0'\},
\end{array}
\end{equation}
estimate \eqref{tghn-PPP} then steers the proof of bounding the $L^p$ norm of
${\mathcal{N}}\big(\partial_t u\big)$ in the direction of Calder\'on-Zygmund theory.
However, what is needed for the latter to apply is a suitable cancellation condition
for the kernels $k_j$, say
\begin{equation}\label{tghn-PPP.5}
\int_{S^{n-2}}k_j(\omega')\,d\omega'=0,\qquad\forall\,j\in\{1,...,n-1\}.
\end{equation}
Under the mere ellipticity assumption on $L$ there is no reason to expect that
a cancellation condition such as \eqref{tghn-PPP.5} happens, so extra assumptions,
of an algebraic nature, need to be imposed to ensure its validity. In the sequel,
we identify a class of operators (cf. Definition~\ref{UaalpIKL}) for which the
respective kernels $k_j$ are odd, thus \eqref{tghn-PPP.5} holds. A natural issue
to consider is whether condition \eqref{tghn-PPP.5} would, on its own, ensure
well-posedness for \eqref{Dir-BVP-p:l}. The answer is no, as it may be seen by
looking at the case of \eqref{Dir-BVP-p:l} with $\ell=2$. This time, the boundary
datum $f$ is assumed to belong to $L^p_2({\mathbb{R}}^{n-1})$ and one is required
to estimate the $L^p$ norm of ${\mathcal{N}}(\partial^2_t u)$. By running the above
procedure, one now obtains (based on \eqref{ioaTga-444} and \eqref{ioaTga-555})
\begin{align}\label{ioaTga-5aa}
\partial^2_t\big[u(x',t)\big] &=
-\sum_{j=1}^{n-1}\partial_t\Big[R^{(j)}_t\ast\big(\partial_jf\big)\Big](x')
\nonumber\\[0pt]
&=\sum_{i=1}^{n-1}\sum_{j=1}^{n-1}\Big[R^{(ij)}_t\ast\big(\partial_i\partial_jf\big)\Big](x')
\,\,\mbox{ for every }\,\,(x',t)\in{\mathbb{R}}^n_{+},
\end{align}
where the second generation auxiliary kernels $R^{(ij)}$, $1\leq i,j\leq n-1$,
are given by
\begin{equation}\label{ioaTga-6bb}
R^{(ij)}(x'):=x_ix_j P^L(x'),\,\,\mbox{ for every }\,\,x'\in{\mathbb{R}}^{n-1}.
\end{equation}
However, these kernels exhibit a worse decay condition at infinity than
their predecessors in \eqref{tghn-jan-1B}, since now we only have
\begin{equation}\label{tghn-jan-1C}
\big|R^{(ij)}_t(x')\big|\leq C\frac{|x_ix_j|}{(t^2+|x'|^2)^{\frac{n}{2}}},
\qquad\forall\,x'\in{\mathbb{R}}^{n-1},\quad\forall\,t>0.
\end{equation}
This rules out, from the outset, the possibility of involving the Calder\'on-Zygmund
theory in the proceedings, thus rendering condition \eqref{tghn-PPP.5} irrelevant
for the case $\ell=2$. Of course, in the context of larger values of $\ell$ one
is faced with similar issues.

In summary, an approach based solely on generic qualitative properties of elliptic
second order operators runs into insurmountable difficulties, and the above
analysis makes the case for the necessity of additional algebraic assumptions
on the nature of the operator $L$, without which the well-posedness of
\eqref{Dir-BVP-p:l} is not generally expected for all $\ell\in{\mathbb{N}}_0$.

In this paper, we identify a large class of second order elliptic operators for
which a version of the procedure outlined above may be successfully implemented.
Using a piece of terminology formulated precisely in the body of the paper,
these are the operators $L$ possessing a distinguished coefficient tensor
(see Definition~\ref{UaalpIKL}). Under such a condition, the auxiliary kernels
referred to earlier become manageable and this eventually leads to the well-posedness
of the higher order regularity Dirichlet problem as formulated in \eqref{Dir-BVP-p:l}.
See Theorem~\ref{them:Dir-l} which is the main result of the paper.
In the last section, we illustrate the scope of the techniques developed here
by proving that such an approach works for any constant (complex) coefficient
scalar elliptic operator, as well as for the Lam\'e system of elasticity.
In fact, even in the case of the Laplacian, our well-posedness result for the
higher order Dirichlet problem in the upper-half space is new. In closing, we
also point out that the same circle of ideas works equally well for other
partial differential equations of basic importance in mathematical physics,
such as the Stokes system of hydrodynamics, the Maxwell system of electromagnetics,
and the Dirac operator of quantum theory (more on this may be found in the
forthcoming monograph \cite{MaMiMiMi}).

\section{Preliminaries}
\setcounter{equation}{0}
\label{S-2}

Throughout, we let ${\mathbb{N}}$ stand for the collection of all strictly
positive integers, and set ${\mathbb{N}}_0:={\mathbb{N}}\cup\{0\}$.
Also, fix $n\in{\mathbb{N}}$ with $n\geq 2$. We shall work in the upper-half space
\begin{equation}\label{RRR-UpHs}
{\mathbb{R}}^{n}_{+}:=\big\{x=(x',x_n)\in
{\mathbb{R}}^{n}={\mathbb{R}}^{n-1}\times{\mathbb{R}}:\,x_n>0\big\},
\end{equation}
whose topological boundary $\partial{\mathbb{R}}^{n}_{+}={\mathbb{R}}^{n-1}\times\{0\}$
will be frequently identified with the horizontal hyperplane ${\mathbb{R}}^{n-1}$
via $(x',0)\equiv x'$. Fix a number $\kappa>0$ and for each boundary point
$x'\in\partial{\mathbb{R}}^{n}_{+}$ introduce the conical nontangential approach region
\begin{equation}\label{NT-1}
\Gamma(x'):=\Gamma_\kappa(x'):=\big\{y=(y',t)\in{\mathbb{R}}^{n}_{+}:\,
|x'-y'|<\kappa\,t\big\}.
\end{equation}
Given a vector-valued function $u:{\mathbb{R}}^{n}_{+}\to{\mathbb{C}}^M$,
define the nontangential maximal function of $u$ by
\begin{equation}\label{NT-Fct}
\big({\mathcal{N}}u\big)(x'):=\big({\mathcal{N}}_\kappa u\big)(x')
:=\sup\,\big\{|u(y)|:\,y\in\Gamma_\kappa(x')\},\qquad x'\in\partial{\mathbb{R}}^{n}_{+}.
\end{equation}
As is well-known, for every $\kappa,\kappa'>0$ and $p\in(0,\infty)$ there exist
finite constants $C_0,C_1>0$ such that
\begin{equation}\label{N-Nal}
C_0\|{\mathcal{N}}_\kappa u\|_{L^p(\partial{\mathbb{R}}^{n}_{+})}
\leq\|{\mathcal{N}}_{\kappa'}\,u\|_{L^p(\partial{\mathbb{R}}^{n}_{+})}
\leq C_1 \|{\mathcal{N}}_\kappa u\|_{L^p(\partial{\mathbb{R}}^{n}_{+})},
\end{equation}
for each function $u$. Whenever meaningful, we also define
\begin{equation}\label{nkc-EE-2}
u\Big|^{{}^{n.t.}}_{\partial{\mathbb{R}}^{n}_{+}}(x')
:=\lim_{\Gamma_{\kappa}(x')\ni y\to (x',0)}u(y)
\quad\mbox{for }\,x'\in\partial{\mathbb{R}}^{n}_{+}.
\end{equation}

For each $p\in(1,\infty)$ and $k\in\mathbb{N}_0$ denote by $L^p_k(\mathbb{R}^{n-1})$
the classical Sobolev space of order $k$ in $\mathbb{R}^{n-1}$, consisting of
functions from $L^p(\mathbb{R}^{n-1})$ whose distributional derivatives up to order
$k$ are in $L^p(\mathbb{R}^{n-1})$. This becomes a Banach space when equipped with
the natural norm
\begin{equation}\label{Lnanb}
\|f\|_{L^p_k(\mathbb{R}^{n-1})}:=\|f\|_{L^p(\mathbb{R}^{n-1})}
+\sum_{|\alpha|\le k}\|\partial^\alpha f\|_{L^p(\mathbb{R}^{n-1})},\qquad
\forall\,f\in L^p_k(\mathbb{R}^{n-1}).
\end{equation}

Let $L$ be an elliptic operator as in \eqref{L-def}-\eqref{L-ell.X}.
Call $A:=\bigl(a^{\alpha\beta}_{rs}\bigr)_{\alpha,\beta,r,s}$ the
{\tt coefficient tensor} of $L$. To emphasize the dependence of $L$ on $A$,
let us agree to write $L_A$ in place of $L$ whenever necessary. In general,
there are multiple ways of expressing a given system $L$ as in \eqref{L-def}.
Indeed, if for any given $A=\bigl(a^{\alpha\beta}_{rs}\bigr)_{\alpha,\beta,r,s}$,
we define $A_{\rm sym}:=\Bigl(\tfrac{1}{2}
\bigl(a^{\alpha\beta}_{rs}+a^{\alpha\beta}_{sr}\bigr) \Bigr)_{\alpha,\beta,r,s}$, then
\begin{equation}\label{UF-HB.85.b}
L_{A_1}=L_{A_2}\,\Longleftrightarrow\,(A_1-A_2)_{\rm sym}=0.
\end{equation}
These considerations suggest introducing
\begin{equation}\label{AL-DDD}
{\mathfrak{A}}_L:=\Big\{A=\bigl(a_{rs}^{\alpha\beta}\bigr)
_{\substack{1\leq r,s\leq n\\[1pt] 1\leq\alpha,\beta\leq M}}\in{\mathbb{C}}^{\,nM}
\times{\mathbb{C}}^{\,nM}:\,L=L_A\Big\}.
\end{equation}
It follows from \eqref{UF-HB.85.b} that if the original coefficient tensor of $L$
satisfies the Legendre-Hadamard ellipticity condition \eqref{L-ell.X} then
any other coefficient tensor in ${\mathfrak{A}}_L$ does so. In other words,
the Legendre-Hadamard ellipticity condition is an intrinsic property of
the differential operator being considered, which does not depend on the
choice of a coefficient tensor used to represent this operator.

Given a system $L$ as in \eqref{L-def}, let $L^\top$ be the {\tt transposed}
of $L$, i.e., the $M\times M$ system of differential operators satisfying
\begin{equation}\label{Lam-TFF73}
\int_{{\mathbb{R}}^n}\langle Lu,v\rangle\,d{\mathscr{L}}^n
=\int_{{\mathbb{R}}^n}\big\langle u,L^\top v\big\rangle\,d{\mathscr{L}}^n,\qquad
\forall\,u,v\in{\mathscr{C}}_c^\infty\bigl({\mathbb{R}}^n\bigr)
,\,\mbox{${\mathbb{C}}^M$-valued},
\end{equation}
where ${\mathscr{L}}^n$ stands for the Lebesgue measure in ${\mathbb{R}}^n$.
A moment's reflection then shows that, if $L$ is as in \eqref{L-def}, then
\begin{equation}\label{L-def-Tra}
L^\top u=\Bigl(\partial_r(a^{\beta\alpha}_{sr}\partial_s u_\beta)
\Bigr)_{1\leq\alpha\leq M},\qquad\forall\,
u=(u_\beta)_{1\leq\beta\leq M}\in{\mathscr{C}}^2({\mathbb{R}}^n).
\end{equation}
That is, if $A^{\top}:=\bigl(a^{\beta\alpha}_{sr}\bigr)_{\substack{1\leq r,s\leq n
\\[1pt] 1\leq\alpha,\beta\leq M}}$ denotes the transpose of
$A=\bigl(a^{\alpha\beta}_{rs}\bigr)_{\substack{1\leq r,s\leq n \\[1pt] 1\leq\alpha,\beta\leq M}}$, formula \eqref{L-def-Tra} amounts to saying that
$\bigl(L_A\bigr)^{\top}=L_{A^\top}$.

The theorem below summarizes properties of a distinguished fundamental solution
of the operator $L$. It builds on the work carried out in various degrees of
generality in \cite[pp.\,72-76]{John}, \cite[p.\,169]{Hor-1}, \cite{Shapiro}, \cite[p.\,104]{Morrey}, and a proof in the present formulation may be found
in \cite{DM}, \cite{IMM}.

\begin{theorem}\label{FS-prop}
Assume that $L$ is an $M\times M$ elliptic, second order system in ${\mathbb{R}}^n$,
with complex constant coefficients as in \eqref{L-def}. Then there
exists a matrix $E=\bigl(E_{\alpha\beta}\bigr)_{1\leq\alpha,\beta\leq M}$
whose entries are tempered distribution in ${\mathbb{R}}^n$ and such that
the following properties hold:

\begin{list}{$(\theenumi)$}{\usecounter{enumi}\leftmargin=.8cm
\labelwidth=.8cm\itemsep=0.2cm\topsep=.1cm
\renewcommand{\theenumi}{\alph{enumi}}}
\item For each $\alpha,\beta\in\{1,...,M\}$, $E_{\alpha\beta}\in {\mathscr{C}}^\infty(\mathbb{R}^n \setminus\{0\})$ and $E_{\alpha\beta}(-x)=E_{\alpha\beta}(x)$ for all $x\in{\mathbb{R}}^n\setminus\{0\}$.
\item If $\delta_y$ stands for Dirac's delta distribution
with mass at $y$ then for each indices $\alpha,\beta\in\{1,...,M\}$, and every $x,y\in{\mathbb{R}}^n$,
\begin{equation}\label{fs}
\partial_{x_r}a^{\alpha\gamma}_{rs}\partial_{x_s}
\bigl[E_{\gamma\beta}(x-y)\bigr]=
\left\{
\begin{array}{ll}
0 & \mbox{ if }\,\,\alpha\neq\beta,
\\[4pt]
\delta_y(x) & \mbox{ if }\,\,\alpha=\beta.
\end{array}\right.
\end{equation}
\vskip 0.03in
\item For each $\alpha,\beta\in\{1,...,M\}$, one has
\begin{equation}\label{fs-str}
E_{\alpha\beta}(x)=\Phi_{\alpha\beta}(x)+c_{\alpha\beta}\ln|x|,
\qquad\forall\,x\in\mathbb{R}^n\setminus\{0\},
\end{equation}
where $\Phi_{\alpha\beta}\in {\mathscr{C}}^\infty(\mathbb{R}^n\setminus\{0\})$
is a homogeneous function of degree $2-n$, and the matrix
$\bigl(c_{\alpha\beta}\bigr)_{1\leq\alpha,\beta\leq M}\in{\mathbb{C}}^{M\times M}$
is identically zero when $n\geq 3$.
\item For each $\gamma\in\mathbb{N}_0^n$ there exists a finite constant
$C_\gamma>0$ such that for each $x\in{\mathbb{R}}^n\setminus\{0\}$
\begin{equation}\label{fs-est}
|\partial^\gamma E(x)|\leq
\left\{
\begin{array}{l}
\displaystyle\frac{C_\gamma}{|x|^{n+|\gamma|-2}}
\quad\mbox{ if either $n\geq 3$, or $n=2$ and $|\gamma|>0$},
\\[16pt]
C_0\bigl(1+\bigl|\ln|x|\bigr|\bigr)\quad\mbox{ if }\,\,n=2
\,\,\mbox{ and }\,\,|\gamma|=0.
\end{array}
\right.
\end{equation}
\item When restricted to ${\mathbb{R}}^n\setminus\{0\}$,
the (matrix-valued) distribution $\widehat{E}$ is a ${\mathscr{C}}^\infty$
function and, with ``hat" denoting the Fourier transform in ${\mathbb{R}}^n$,
\begin{equation}\label{E-ftXC}
\widehat{E}(\xi)=-\Bigl[\Bigl(\xi_r\xi_s a^{\alpha\beta}_{rs}\Bigr)_{
1\leq \alpha,\beta\leq M}\Bigr]^{-1}
\quad\mbox{for each}\quad\xi\in{\mathbb{R}}^n\setminus\{0\}.
\end{equation}
\item One can assign to each elliptic differential operator $L$ as in \eqref{L-def}
a fundamental solution $E^L$ which satisfies $(a)$--$(e)$ above and, in addition,
$\bigl(E^L\bigr)^\top=E^{L^\top}$, where the superscript $\top$ denotes transposition.
\item In the particular case $M=1$, i.e., in the situation when
$L={\rm div}A\nabla$ for some matrix
$A=(a_{rs})_{1\leq r,s\leq n}\in{\mathbb{C}}^{n\times n}$, an explicit formula
for the fundamental solution $E$ of $L$ is
\begin{equation}\label{YTcxb-ytSH}
E(x)=\left\{
\begin{array}{ll}
-\frac{1}{(n-2)\omega_{n-1}\sqrt{{\rm det}\,(A_{\rm sym})}}
\big\langle(A_{\rm sym})^{-1}x,x\big\rangle^{\frac{2-n}{2}}
& \mbox{ if }\,\,n\geq 3,
\\[12pt]
\frac{1}{4\pi\sqrt{{\rm det}\,(A_{\rm sym})}}
\log\bigl(\langle(A_{\rm sym})^{-1}x,x\rangle\bigr) & \mbox{ if }\,\,n=2,
\end{array}
\right.
\end{equation}
for $x\in{\mathbb{R}}^n\setminus\{0\}$. Here, $\log$ denotes the principal
branch of the complex logarithm function
(defined by the requirement that $z^t=e^{t\log z}$ holds for every
$z\in{\mathbb{C}}\setminus(-\infty,0]$ and every $t\in{\mathbb{R}}$).
\end{list}
\end{theorem}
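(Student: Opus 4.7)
The plan is to construct $E$ through Fourier analysis. By the Legendre-Hadamard condition \eqref{L-ell.X}, the symbol matrix $P(\xi):=\bigl(\xi_r\xi_s a^{\alpha\beta}_{rs}\bigr)_{1\leq\alpha,\beta\leq M}$ is invertible for every $\xi\in{\mathbb{R}}^n\setminus\{0\}$, with $P(\xi)^{-1}$ being ${\mathscr{C}}^\infty$ and positively homogeneous of degree $-2$ there. The natural definition is therefore $\widehat{E}(\xi):=-P(\xi)^{-1}$ on ${\mathbb{R}}^n\setminus\{0\}$, as forced by (e). When $n\geq 3$ the right-hand side is locally integrable near the origin and extends unambiguously to a tempered distribution; when $n=2$ it is not, and one must extend via a Hadamard-type finite-part regularization, which leaves a one-parameter ambiguity that is precisely responsible for the logarithmic term in \eqref{fs-str}. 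Setting $E:=\mathcal{F}^{-1}\widehat{E}$ yields the candidate fundamental solution.

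Once $E$ has been produced, most of the properties follow directly. The symmetry $E(-x)=E(x)$ in (a) comes from $P(-\xi)=P(\xi)$, and ${\mathscr{C}}^\infty$ smoothness of $E$ off the origin is standard for inverse Fourier transforms of smooth symbols that are tempered and smoothly homogeneous away from zero. For (b), the identity $P(\xi)\widehat{E}(\xi)=-I$ holds pointwise on ${\mathbb{R}}^n\setminus\{0\}$, hence $\widehat{L_A E}+I\cdot\widehat{\delta_0\otimes I_M}$ is a tempered distribution supported at the origin; homogeneity considerations force this to vanish, producing \eqref{fs}. The estimate (d) is immediate from the homogeneity structure (with the $|\gamma|=0$, $n=2$ case handled by the logarithmic correction). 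Property (f), i.e.\ $(E^L)^\top=E^{L^\top}$, follows on the Fourier side from $P^{L^\top}(\xi)=P^L(\xi)^\top$, provided we make a symmetric choice for the constants $c_{\alpha\beta}$ in dimension two.

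The structure formula (c) requires invoking the theory of homogeneous tempered distributions: the inverse Fourier transform of a smooth $(-2)$-homogeneous function on ${\mathbb{R}}^n\setminus\{0\}$ is, for $n\geq 3$, a smooth $(2-n)$-homogeneous function off the origin (so $c_{\alpha\beta}\equiv 0$), while for $n=2$ the inversion produces a smooth homogeneous piece plus a constant multiple of $\ln|x|$, with $c_{\alpha\beta}$ determined by evaluating the finite-part integral explicitly. Finally, (g) is obtained by the linear change of variables $y:=A_{\rm sym}^{-1/2}x$, which (using $L_A=L_{A_{\rm sym}}$ via \eqref{UF-HB.85.b}) conjugates $L={\rm div}\,A\nabla$ to the ordinary Laplacian; substituting into the classical fundamental solution of $\Delta$ and tracking the Jacobian $\sqrt{\det A_{\rm sym}}$ yields \eqref{YTcxb-ytSH}, with the principal branch of $\log$ handling the case of complex $A_{\rm sym}$.

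The main obstacle is the two-dimensional case: here $P(\xi)^{-1}\notin L^1_{\rm loc}({\mathbb{R}}^2)$, so the passage from symbol to distribution is genuinely nontrivial and is determined only modulo polynomials. One must make a specific extension that is simultaneously compatible with the fundamental-solution property (b), with the symmetry in (a), and with the transpose identity (f); this is what forces the emergence of the matrix $(c_{\alpha\beta})$ and pins it down uniquely modulo an inconsequential additive polynomial. The scalar formula (g), while more concrete, also demands care in choosing the principal branch of the complex logarithm so that $\langle (A_{\rm sym})^{-1}x,x\rangle$ avoids the negative real axis, which is where the Legendre-Hadamard positivity \eqref{L-ell.X} (applied to $A_{\rm sym}$) enters decisively.
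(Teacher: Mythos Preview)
The paper does not actually prove this theorem: it is stated as a summary of known results, with the proof deferred to the references \cite{John}, \cite{Hor-1}, \cite{Shapiro}, \cite{Morrey}, \cite{DM}, \cite{IMM}. So there is no ``paper's own proof'' to compare against, and your Fourier-analytic outline is precisely the standard route taken in those sources (in particular in \cite{DM} and \cite{IMM}).

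Your sketch is sound in its broad strokes. One point deserves more care: in part (g) you propose the change of variables $y=A_{\rm sym}^{-1/2}x$, but $A_{\rm sym}$ has complex entries in general, so this is not a real linear change of coordinates and cannot be used literally to pull back the Laplacian's fundamental solution. The cleanest fix is either to verify \eqref{YTcxb-ytSH} by direct differentiation (checking $L E=\delta_0$ distributionally), or to establish the formula first for real symmetric positive-definite $A$ via your change of variables and then extend to the complex Legendre--Hadamard regime by analytic continuation in the entries of $A_{\rm sym}$; the ellipticity condition \eqref{L-ell.X} guarantees that $\langle (A_{\rm sym})^{-1}x,x\rangle$ stays off $(-\infty,0]$ along the way, so the principal branch remains well defined.
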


\section{Poisson kernels}

In this section we discuss the notion of Poisson kernel in ${\mathbb{R}}^n_{+}$
for an operator $L$ as in \eqref{L-def}-\eqref{L-ell.X}. We also identify a
subclass of these Poisson kernels, which we call special Poisson kernels, that
plays a significant role in the treatment of boundary value problems.

\begin{definition}[Poisson kernel for $L$ in $\mathbb{R}^{n}_+$]\label{defi:Poisson}
Let $L$ be a second order elliptic system with complex coefficients as in
\eqref{L-def}-\eqref{L-ell.X}.
A {\tt Poisson kernel} for $L$ in $\mathbb{R}^{n}_+$ is a matrix-valued function $P=\big(P_{\alpha\beta}\big)_{1\leq\alpha,\beta\leq M}:
\mathbb{R}^{n-1}\to\mathbb{C}^{M\times M}$ such that:
\begin{list}{$(\theenumi)$}{\usecounter{enumi}\leftmargin=.8cm
\labelwidth=.8cm\itemsep=0.2cm\topsep=.1cm
\renewcommand{\theenumi}{\alph{enumi}}}
\item there exists $C\in(0,\infty)$ such that
$\displaystyle|P(x')|\leq\frac{C}{(1+|x'|^2)^{\frac{n}2}}$ for each
$x'\in\mathbb{R}^{n-1}$;
\item one has $\displaystyle\int_{\mathbb{R}^{n-1}}P(x')\,dx'=I_{M\times M}$,
the $M\times M$ identity matrix;
\item if $K(x',t):=P_t(x'):=t^{1-n}P(x'/t)$, for each
$x\in\mathbb{R}^{n-1}$ and $t>0$, then the function
$K=\big(K_{\alpha\beta}\big)_{1\leq\alpha,\beta\leq M}$
satisfies (in the sense of distributions)
\begin{equation}\label{uahgab-UBVCX}
LK_{\cdot\beta}=0\,\,\mbox{ in }\,\,\mathbb{R}^{n}_+
\,\,\mbox{ for each }\,\,\beta\in\{1,...,M\}.
\end{equation}
\end{list}
\end{definition}

\vskip 0.06in
\begin{remark}\label{Ryf-uyf}
The following comments pertain to Definition~\ref{defi:Poisson}.
\begin{list}{$(\theenumi)$}{\usecounter{enumi}\leftmargin=.8cm
\labelwidth=.8cm\itemsep=0.2cm\topsep=.1cm
\renewcommand{\theenumi}{\roman{enumi}}}
\item Condition $(a)$ ensures that the integral in part $(b)$ is
absolutely convergent.
\item From $(a)$ and $(b)$ one can easily check that for each $p\in(1,\infty]$
there exists a finite constant $C=C(c,M,n,p)>0$ with the property that if
$f\in L^p(\mathbb{R}^{n-1})$ and $u(x',t):=(P_t* f)(x')$ for
$(x',t)\in\mathbb{R}^{n}_+$, then
\begin{equation}\label{smetg}
\big\|{\mathcal{N}}u\big\|_{L^p(\partial\mathbb{R}^{n}_+)}\leq C
\|f\|_{L^p(\mathbb{R}^{n-1})}\,\,\mbox{ and }\,\,
u\Big|^{{}n.t.}_{\partial{\mathbb{R}}^n_{+}}=f\,\,\mbox{ a.e. in }\,\,{\mathbb{R}}^{n-1}.
\end{equation}
\item Condition $(c)$ and the ellipticity of the operator $L$ ensure that
$K\in{\mathscr{C}}^\infty(\mathbb{R}^{n}_+)$. Given that
$P(x')=K(x',1)$ for each point $x'\in{\mathbb{R}}^{n-1}$, we then deduce that
$P\in{\mathscr{C}}^\infty(\mathbb{R}^{n-1})$. Furthermore, via a direct calculation
it may be checked that
\begin{equation}\label{ioaTga}
\partial_t\big[P_t(x')\big]=-\sum_{j=1}^{n-1}\partial_{x_j}\Big[\frac{x_j}{t}P_t(x')\Big]
\,\,\mbox{ for every }\,\,(x',t)\in{\mathbb{R}}^n_{+}.
\end{equation}
\item Condition $(b)$ is equivalent to $\lim\limits_{t\to 0^{+}}P_t(x')
=\delta_{0'}(x')\,I_{M\times M}$ in ${\mathscr{D}}'({\mathbb{R}}^{n-1})$,
where $\delta_{0'}$ is Dirac's distribution with mass at the origin $0'$
of ${\mathbb{R}}^{n-1}$.
\end{list}
\end{remark}

Poisson kernels for elliptic boundary value problems in a half-space have
been studied extensively in \cite{ADNI}, \cite{ADNII}, \cite[\S{10.3}]{KMR2},
\cite{Sol}, \cite{Sol1}, \cite{Sol2}. Here we record a corollary of more general
work done by S.\,Agmon, A.\,Douglis, and L.\,Nirenberg in \cite{ADNII}.

\begin{theorem}\label{ya-T4-fav}
Any elliptic differential operator $L$ as in \eqref{L-def} has a Poisson kernel
$P$ in the sense of Definition~\ref{defi:Poisson}, which has the additional
property that the function $K(x',t):=P_t(x')$ for all $(x',t)\in{\mathbb{R}}^n_{+}$,
satisfies $K\in{\mathscr{C}}^\infty\big(\overline{{\mathbb{R}}^n_{+}}
\setminus B(0,\varepsilon)\big)$ for every $ \varepsilon>0 $ and
$K(\lambda x)=\lambda^{1-n}K(x)$ for all $x\in{\mathbb{R}}^n_{+}$ and $\lambda>0$.

Hence, in particular, for each $\alpha\in{\mathbb{N}}_0^n$ there exists
$C_\alpha\in(0,\infty)$ with the property that
$\big|(\partial^\alpha K)(x)\big|\leq C_\alpha\,|x|^{1-n-|\alpha|}$, for every $x\in{\overline{{\mathbb{R}}^n_{+}}}\setminus\{0\}$.
\end{theorem}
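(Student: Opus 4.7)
The plan is to derive $K$ (and hence $P$) from the Agmon-Douglis-Nirenberg theory of elliptic boundary value problems in the upper half-space. The ellipticity condition \eqref{L-ell.X} ensures that, for every $\xi'\in\mathbb{R}^{n-1}\setminus\{0'\}$, the polynomial $\det\bigl[a^{\alpha\beta}_{rs}\xi_r\xi_s\bigr]$ viewed as a polynomial in $\tau=\xi_n$ has precisely $M$ roots of positive imaginary part (counted with multiplicity), and that Dirichlet data complement $L$ in the Lopatinski sense of \cite{ADNII}. Their machinery then produces, as a Fourier multiplier in the tangential variable, a matrix-valued kernel
\begin{equation*}
\widetilde{K}(\xi',t)=\frac{1}{2\pi i}\oint_{\gamma(\xi')}e^{it\tau}\Pi(\xi',\tau)\,d\tau,
\end{equation*}
smooth on $(\mathbb{R}^{n-1}\setminus\{0'\})\times[0,\infty)$, with $\widetilde{K}(\xi',0)=I_{M\times M}$, positively homogeneous of degree zero under $(\xi',t)\mapsto(\lambda\xi',t/\lambda)$, and rapidly decaying in $t$ at infinity. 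Here $\gamma(\xi')$ encloses the upper-half-plane roots above, and $\Pi(\xi',\tau)$ is the rational matrix produced by Cramer's rule from the symbol of $L$ and the Dirichlet projector.

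Inverting the Fourier transform yields $K(x',t)$, and I would set $P(x'):=K(x',1)$. Conditions $(b)$ and $(c)$ in Definition~\ref{defi:Poisson} are then immediate: $(c)$ because $\Pi(\xi',\tau)$ is annihilated in the $\tau$-variable by the symbol of $L$, and $(b)$ because $\widetilde{K}(\xi',0)=I_{M\times M}$. The homogeneity claim is purely formal from $K(x',t)=t^{1-n}P(x'/t)$, since
\begin{equation*}
K(\lambda x',\lambda t)=(\lambda t)^{1-n}P\bigl(\tfrac{\lambda x'}{\lambda t}\bigr)=\lambda^{1-n}K(x',t),\qquad\forall\,\lambda>0.
\end{equation*}
Smoothness of $K$ in the open half-space is Remark~\ref{Ryf-uyf}~(iii), and smoothness across $\{(x',0):x'\neq 0'\}$ follows by transferring the ADN smoothness of $\widetilde{K}$ on $(\mathbb{R}^{n-1}\setminus\{0'\})\times[0,\infty)$ through the inverse Fourier transform; equivalently, the homogeneity identifies $\overline{\mathbb{R}^n_{+}}\setminus\{0\}$ with $(0,\infty)\times\bigl(S^{n-1}\cap\overline{\mathbb{R}^n_{+}}\bigr)$, reducing the matter to smoothness of the restriction $K|_{S^{n-1}\cap\overline{\mathbb{R}^n_{+}}}$.

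The derivative estimate is now automatic: differentiating $K(\lambda x)=\lambda^{1-n}K(x)$ in $x$ yields $(\partial^\alpha K)(\lambda x)=\lambda^{1-n-|\alpha|}(\partial^\alpha K)(x)$ for every $\alpha\in{\mathbb{N}}_0^n$ and every $\lambda>0$; taking $\lambda:=|x|$ reduces matters to bounding $|(\partial^\alpha K)(y)|$ for $y$ in the compact set $S^{n-1}\cap\overline{\mathbb{R}^n_{+}}$, which is finite by the smoothness just established, and the supremum furnishes $C_\alpha$. The bound $(a)$ in Definition~\ref{defi:Poisson} is an analogous dimensional-analysis consequence, sharpened by the observation that $(b)$ forces $K$ to vanish on the punctured boundary (the identity $\lim_{t\to 0^+}P_t=\delta_{0'}I_{M\times M}$ in Remark~\ref{Ryf-uyf}~(iv) gives $K(x',0)=0$ for $x'\neq 0'$): combined with smoothness and homogeneity, this upgrades the naive $|K(x)|\leq C|x|^{1-n}$ to $|K(x',t)|\leq C\,t(t^2+|x'|^2)^{-n/2}$, which at $t=1$ specializes to $(a)$. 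The chief technical obstacle is the ADN input itself---smoothness of $\widetilde{K}$ up to $t=0$ on the punctured tangential Fourier space---which is precisely where the Lopatinski complementing determinant associated with Dirichlet data, and hence the full strength of ellipticity, enters. Once this input is quoted from \cite{ADNII}, the remaining steps amount to bookkeeping.
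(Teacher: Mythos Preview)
Your proposal is correct and aligns with the paper's treatment: the paper does not supply an independent proof of this theorem at all, but simply records it as a corollary of the Agmon--Douglis--Nirenberg theory in \cite{ADNII}, which is exactly the source you invoke. Your sketch is a faithful unpacking of what that citation delivers---the contour-integral Fourier multiplier, the Lopatinski condition for Dirichlet data, and the homogeneity/smoothness bookkeeping---so there is nothing to correct or contrast.
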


One important consequence of the existence of a Poisson kernel $P$ for an operator
$L$ in the upper-half space is that for every $f\in L^p(\mathbb{R}^{n-1})$
the convolution $(P_t* f)(x')$ for $(x',t)\in\mathbb{R}^{n}_+$, yields a solution
for the $L^p$-Dirichlet problem for $L$ in the upper-half space. Hence, the difficulty
in proving well-posedness for such a problem comes down to proving uniqueness.
In the case of the Laplacian, this is done by employing the maximum principle for
harmonic functions, a tool not available in the case of systems. In \cite{MaMiMiMi}
we overcome this difficulty by constructing an appropriate Green function associated
with the $L^p$-Dirichlet problem for $L$ in the upper-half space.

\begin{theorem}\cite{MaMiMiMi}\label{YF-TY.6yh}
For each $p\in(1,\infty)$ the $L^p$-Dirichlet boundary value problem
for $L$ in $\mathbb{R}^{n}_+$, that is, \eqref{Dir-BVP-p:l} with $\ell=0$,
has a unique solution $u=(u_\beta)_{1\leq\beta\leq M}$ satisfying, for some
finite $C=C(L,n,p)>0$,
\begin{equation}\label{Dir-BVP-p2F}
\big\|{\mathcal{N}}u\big\|_{L^p(\partial\mathbb{R}^{n}_+)}\leq C
\|f\|_{L^p(\mathbb{R}^{n-1})}.
\end{equation}
Moreover, the solution $u$ is given by
\begin{equation}\label{eqn:exp-u-Poisson}
u(x',t)=(P_t*f)(x')
=\Big(\int_{\mathbb{R}^{n-1}} \big(P_{\beta\alpha} \big)_t(x'-y')\,f_\alpha(y')\,dy'\Big)_{\beta}
\end{equation}
for all $(x',t)\in{\mathbb{R}}^n_{+}$, where $P$ is the Poisson
kernel from Theorem \ref{ya-T4-fav}.
\end{theorem}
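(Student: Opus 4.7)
The plan is to split the theorem into an existence statement (furnished by the explicit Poisson integral formula) and a uniqueness statement (which is the deeper assertion). For existence, I would set $P$ to be the Poisson kernel from Theorem \ref{ya-T4-fav} and define $u$ by \eqref{eqn:exp-u-Poisson}. Absolute convergence of the integral at each point of $\mathbb{R}^{n}_+$ is immediate from the size bound in Definition \ref{defi:Poisson}(a) together with Hölder's inequality, since the decay of $P$ places it in every $L^{p'}(\mathbb{R}^{n-1})$ with $p'\geq 1$. Smoothness of $u$ in $\mathbb{R}^{n}_+$ and the identity $Lu=0$ follow by differentiating under the integral sign, which is legitimate due to the pointwise bounds $|\partial^\alpha K(x)|\leq C_\alpha |x|^{1-n-|\alpha|}$ stated in Theorem \ref{ya-T4-fav}, and by property (c) of Definition \ref{defi:Poisson}.

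The boundary trace identity $u\bigl|^{n.t.}_{\partial\mathbb{R}^n_+}=f$ a.e.\ in $\mathbb{R}^{n-1}$ and the nontangential maximal estimate \eqref{Dir-BVP-p2F} are then both supplied by \eqref{smetg} in Remark~\ref{Ryf-uyf}(ii). The underlying mechanism is classical: the size condition (a) on $P$ yields the pointwise domination
\begin{equation*}
\bigl({\mathcal{N}}u\bigr)(x')\leq C\,({\mathcal{M}}f)(x'),\qquad x'\in\mathbb{R}^{n-1},
\end{equation*}
by a standard layer-cake/annular decomposition argument; the $L^p$-bound then follows from the boundedness of the Hardy–Littlewood maximal operator on $L^p(\mathbb{R}^{n-1})$ for $p\in(1,\infty)$. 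The nontangential trace identification is an approximation-to-the-identity argument that combines (a) with the normalization $(b)$ in Definition~\ref{defi:Poisson}.

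The main obstacle is uniqueness: if $u$ solves $Lu=0$ in $\mathbb{R}^{n}_+$ with ${\mathcal{N}}u\in L^p(\partial\mathbb{R}^n_+)$ and $u\bigl|^{n.t.}_{\partial\mathbb{R}^n_+}=0$ a.e., we must conclude $u\equiv 0$. Since $L$ is a genuine system, no scalar maximum principle is available; the strategy, as indicated in the text just preceding the statement, is to invoke the Green function $G$ for $L$ in $\mathbb{R}^{n}_+$ constructed in \cite{MaMiMiMi} by correcting the fundamental solution $E^L$ from Theorem~\ref{FS-prop} so that its trace on $\partial\mathbb{R}^n_+$ vanishes. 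Testing against arbitrary $\varphi\in{\mathscr{C}}_c^\infty(\mathbb{R}^n_+)$ and applying Green's formula on large truncated half-spaces, one obtains a representation of $u$ solely in terms of its boundary trace; the technical crux is controlling the boundary and far-field contributions using the integrability of ${\mathcal{N}}u$, a dominated convergence argument in conjunction with the decay properties of $G$. Since the boundary datum is zero, this forces $u\equiv 0$, completing the argument.

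Finally, combining existence with uniqueness forces the solution to be given by \eqref{eqn:exp-u-Poisson}, which is the last assertion of the theorem.
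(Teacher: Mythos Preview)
Your proposal is correct and matches the approach the paper indicates: existence via the Poisson integral and the properties recorded in Remark~\ref{Ryf-uyf}(ii), and uniqueness via the Green function construction from \cite{MaMiMiMi} alluded to in the paragraph immediately preceding the theorem. Note that the paper itself does not supply a proof of this statement but imports it from \cite{MaMiMiMi}, so there is no in-text argument to compare against beyond the sketch you have already reproduced.
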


A corollary of this theorem is the uniqueness of the Poisson kernel
for $L$ in ${\mathbb{R}}^n_{+}$.

\begin{proposition}\label{uniqueness-Poisson}
Any operator $L$ as in \eqref{L-def}-\eqref{L-ell.X} has a unique
Poisson kernel as in Definition~\ref{defi:Poisson} (which is the Poisson kernel
given by Theorem~\ref{ya-T4-fav}).
\end{proposition}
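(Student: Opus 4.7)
The plan is to reduce the statement to the uniqueness part of Theorem~\ref{YF-TY.6yh}. Suppose $P$ and $\widetilde{P}$ are two matrix-valued functions on $\mathbb{R}^{n-1}$, each of which satisfies conditions $(a)$--$(c)$ of Definition~\ref{defi:Poisson} for the given operator $L$. The goal is to show $P\equiv\widetilde{P}$.

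Fix any $p\in(1,\infty)$ and pick an arbitrary column index $\beta\in\{1,\dots,M\}$ and an arbitrary scalar $f\in L^p(\mathbb{R}^{n-1})$. Let $e_\beta$ denote the $\beta$-th standard basis vector of $\mathbb{C}^M$ and set $F:=f\,e_\beta\in L^p(\mathbb{R}^{n-1},\mathbb{C}^M)$. Define
\begin{equation*}
u(x',t):=(P_t*F)(x')\quad\text{and}\quad\widetilde{u}(x',t):=(\widetilde{P}_t*F)(x'),
\qquad (x',t)\in\mathbb{R}^n_+.
\end{equation*}
Because both $P$ and $\widetilde{P}$ satisfy property $(c)$ of Definition~\ref{defi:Poisson}, each column of $K(x',t):=P_t(x')$ and of $\widetilde{K}(x',t):=\widetilde{P}_t(x')$ is annihilated by $L$ in $\mathbb{R}^n_+$; differentiating under the integral sign in the convolution (justified by the decay in $(a)$ together with $f\in L^p$) gives $Lu=0$ and $L\widetilde{u}=0$ in $\mathbb{R}^n_+$. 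Moreover, Remark~\ref{Ryf-uyf}$(ii)$ supplies, for each of these two kernels, the bound $\|\mathcal{N}u\|_{L^p}+\|\mathcal{N}\widetilde{u}\|_{L^p}\le C\|F\|_{L^p}$ together with the nontangential trace identity $u\big|^{n.t.}_{\partial\mathbb{R}^n_+}=F=\widetilde{u}\big|^{n.t.}_{\partial\mathbb{R}^n_+}$ a.e.\ on $\mathbb{R}^{n-1}$. Hence both $u$ and $\widetilde{u}$ are solutions of the $L^p$-Dirichlet problem \eqref{Dir-BVP-p:l} with $\ell=0$ and the same boundary datum $F$.

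By the uniqueness asserted in Theorem~\ref{YF-TY.6yh}, $u\equiv\widetilde{u}$ in $\mathbb{R}^n_+$. Reading column by column, this says that for every $f\in L^p(\mathbb{R}^{n-1})$ and every $\alpha,\beta\in\{1,\dots,M\}$,
\begin{equation*}
\bigl((P_{\alpha\beta})_t * f\bigr)(x')=\bigl((\widetilde{P}_{\alpha\beta})_t*f\bigr)(x')
\quad\text{for all}\quad (x',t)\in\mathbb{R}^n_+.
\end{equation*}
Fix $t>0$ and set $Q:=(P_{\alpha\beta})_t-(\widetilde{P}_{\alpha\beta})_t$. Then $Q$ belongs to $L^1(\mathbb{R}^{n-1})\cap\mathscr{C}^\infty(\mathbb{R}^{n-1})$ by the decay in $(a)$ and the smoothness in Remark~\ref{Ryf-uyf}$(iii)$, and $Q*f\equiv 0$ for every $f\in L^p(\mathbb{R}^{n-1})$. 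Specializing to $f\in \mathscr{C}_c^\infty(\mathbb{R}^{n-1})$ (which lies in $L^p$) and using the continuity of $Q$ yields $Q\equiv 0$ on $\mathbb{R}^{n-1}$. Taking $t=1$ then gives $P_{\alpha\beta}=\widetilde{P}_{\alpha\beta}$ for all $\alpha,\beta$, completing the proof.

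The only genuinely delicate point is the upgrade from ``$u$ and $\widetilde{u}$ are two solutions with the same data'' to actual equality; this is handled entirely by invoking Theorem~\ref{YF-TY.6yh}, so no new analytic work is required here. The remaining steps (differentiation under the integral to verify $Lu=0$, and passage from vanishing of convolutions to vanishing of the kernel) are standard.
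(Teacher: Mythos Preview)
Your proof is correct and follows essentially the same approach as the paper: both arguments reduce uniqueness of the Poisson kernel to the uniqueness statement in Theorem~\ref{YF-TY.6yh} by observing that convolutions of two candidate kernels with any $L^p$ datum give solutions of the same Dirichlet problem, and then conclude the kernels coincide. The paper's version is terser (it works directly with the difference $(P_t-Q_t)\ast f$ as a solution of the homogeneous problem), while you spell out more carefully the passage from vanishing convolutions to vanishing of the kernel, but the core idea is identical.
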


\begin{proof}
Suppose $L$ has two Poisson kernels, say $P$ and $Q$, in ${\mathbb{R}}^n_{+}$.
Then for each $p\in(1,\infty)$ and every $f\in L^p(\mathbb{R}^{n-1})$,
the function $u(x',t):=(P_t-Q_t)*f(x')$ for $(x',t)\in\mathbb{R}^n_{+}$, is
a solution of the homogeneous $L^p$-Dirichlet boundary value problem in
${\mathbb{R}}^n_{+}$. Hence, by Theorem~\ref{YF-TY.6yh}, $u=0$ in ${\mathbb{R}}^n_{+}$.
This forces $P=Q$ in $\mathbb{R}^{n-1}$.
\end{proof}

As mentioned before, there are multiple coefficient tensors which yield a given
system $L$ as in \eqref{L-def}. The following proposition paves the way for
singling out, in Definition~\ref{UaalpIKL} formulated a little later,
a special subclass among all these coefficient tensors.

\begin{proposition}\cite{MaMiMi}\label{HGfs59+E}
Assume that $A=\bigl(a_{rs}^{\alpha\beta}\bigr)
_{\substack{1\leq r,s\leq n\\ 1\leq\alpha,\beta\leq M}}$ is a coefficient
tensor with complex entries satisfying the Legendre-Hadamard ellipticity condition
\eqref{L-ell.X}. Let $L$ be the system associated with the given coefficient tensor
$A$ as in \eqref{L-def} and denote by $E=(E_{\gamma\beta})_{1\leq\gamma,\beta\leq M}$
the fundamental solution from Theorem~\ref{FS-prop} for the system $L$. Also, let
${\rm Symb}_L(\xi):=
-\Bigl(\xi_r\xi_s a^{\alpha\beta}_{rs}\Bigr)_{1\leq\alpha,\beta\leq M}$,
for $\xi\in{\mathbb{R}}^n\setminus\{0\}$, denote the symbol of the differential
operator $L$ and set
\begin{equation}\label{yfg-yL4f}
\bigl(S_{\gamma\beta}(\xi)\bigr)_{1\leq\gamma,\beta\leq M}
:=\Bigl[{\rm Symb}_L(\xi)\Bigr]^{-1}\in{\mathbb{C}}^{\,M\times M},
\qquad\forall\,\xi\in{\mathbb{R}}^n\setminus\{0\}.
\end{equation}
Then the following two conditions are equivalent.
\begin{list}{$(\theenumi)$}{\usecounter{enumi}\leftmargin=.8cm
\labelwidth=.8cm\itemsep=0.2cm\topsep=.1cm
\renewcommand{\theenumi}{\alph{enumi}}}

\item For each $s,s'\in\{1,...,n\}$ and each
$\alpha,\gamma\in\{1,...,M\}$ there holds
\begin{equation}\label{Ea4-fCii-n3}
\Bigl[a^{\beta\alpha}_{s's}-a^{\beta\alpha}_{ss'}
+\xi_r a^{\beta\alpha}_{rs}\partial_{\xi_{s'}}
-\xi_r a^{\beta\alpha}_{rs'}\partial_{\xi_{s}}\Bigr]
S_{\gamma\beta}(\xi)=0,\qquad\forall\,\xi\in{\mathbb{R}}^n\setminus\{0\},
\end{equation}
and (with $\sigma_{S^1}$ denoting the arc-length measure on $S^1$)
\begin{equation}\label{Ea4-fCii-n2B}
\int_{S^1}\Big(a^{\beta\alpha}_{rs}\xi_{s'}-a^{\beta\alpha}_{rs'}\xi_{s}\Big)
\big(\xi_r S_{\gamma\beta}(\xi)\big)\,d\sigma_{S^1}(\xi)=0\,\,\mbox{ if }\,\,n=2.
\end{equation}
\item There exists a matrix-valued function
$k=\bigl\{k_{\gamma\alpha}\bigr\}_{1\leq\gamma,\alpha\leq M}
:{\mathbb{R}}^{n}\setminus\{0\}\to{\mathbb{C}}^{M\times M}$
with the property that for each $\gamma,\alpha\in\{1,...,M\}$
and $s\in\{1,...,n\}$ one has
\begin{equation}\label{yfg-yLLL}
a^{\beta\alpha}_{rs}(\partial_r E_{\gamma\beta})(x)
=x_s k_{\gamma\alpha}(x)\,\,\mbox{ for all }\,\,
x\in{\mathbb{R}}^n\setminus\{0\}.
\end{equation}
\end{list}
\end{proposition}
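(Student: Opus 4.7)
The strategy is to Fourier-transform condition (b) and recognize (a) as the closedness (together, when $n = 2$, with vanishing of the single period around the origin) of a certain 1-form on $\mathbb{R}^n\setminus\{0\}$. By Theorem~\ref{FS-prop}(e), on $\mathbb{R}^n\setminus\{0\}$ one has $\widehat{E}_{\gamma\beta}(\xi) = S_{\gamma\beta}(\xi)$, since $(\xi_r\xi_s a^{\alpha\beta}_{rs})_{\alpha\beta} = -\mathrm{Symb}_L(\xi)$ and $[-M]^{-1} = -M^{-1}$. Set $T_s^{\gamma\alpha}(x) := a^{\beta\alpha}_{rs}(\partial_r E_{\gamma\beta})(x)$, which by Theorem~\ref{FS-prop}(a) is smooth on $\mathbb{R}^n\setminus\{0\}$ and homogeneous of degree $1-n$. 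Condition (b) asserts $T_s^{\gamma\alpha} = x_s k_{\gamma\alpha}$; the transform rules $(\partial_r f)^{\wedge} = i\xi_r\widehat f$ and $(x_s f)^{\wedge} = i\partial_{\xi_s}\widehat f$ convert this, on the Fourier side, into the existence of $F_{\gamma\alpha} = \widehat k_{\gamma\alpha}$ satisfying $\partial_{\xi_s}F_{\gamma\alpha}(\xi) = a^{\beta\alpha}_{rs}\xi_r S_{\gamma\beta}(\xi)$, i.e., to the smooth 1-form
$$\omega^{\gamma\alpha} := a^{\beta\alpha}_{rs}\xi_r S_{\gamma\beta}(\xi)\,d\xi_s$$
on $\mathbb{R}^n\setminus\{0\}$ being exact.

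To check the equivalence, I would expand the closedness condition $d\omega^{\gamma\alpha} = 0$ via the product rule, which reproduces \eqref{Ea4-fCii-n3} verbatim. When $n \geq 3$, the domain $\mathbb{R}^n\setminus\{0\}$ is simply connected and the Poincar\'e lemma turns closedness directly into exactness; when $n = 2$, $H^1_{\mathrm{dR}}(\mathbb{R}^2\setminus\{0\}) \cong \mathbb{R}$ is generated by $[S^1]$, so $\omega^{\gamma\alpha}$ is exact iff it is closed and the single period $\oint_{S^1}\omega^{\gamma\alpha}$ vanishes. Parameterizing $S^1$ as $\xi = (\cos\theta,\sin\theta)$ so that $d\xi_1 = -\xi_2\,d\sigma_{S^1}$ and $d\xi_2 = \xi_1\,d\sigma_{S^1}$, this period integral becomes precisely \eqref{Ea4-fCii-n2B}. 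Thus (a) is equivalent to the existence of the Fourier-side primitive $F_{\gamma\alpha}$; back on the physical side, the inverse Fourier transform of $F_{\gamma\alpha}$ furnishes $k_{\gamma\alpha}$, and \eqref{yfg-yLLL} holds by reversing the Fourier manipulation.

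The main obstacle I anticipate is a distributional matching issue. The primitive $F_{\gamma\alpha}$ is originally defined only on $\mathbb{R}^n\setminus\{0\}$, and one must extend it to a tempered distribution on $\mathbb{R}^n$ so that its inverse Fourier transform is a genuine function $k_{\gamma\alpha}$ satisfying \eqref{yfg-yLLL} pointwise. The homogeneity of $\omega^{\gamma\alpha}$ (degree $-1$) forces $F_{\gamma\alpha}$ to be at worst logarithmically varying, hence locally integrable at the origin and of mild growth at infinity, so tempered extension is available; nevertheless, the Fourier identities $(\partial_r E_{\gamma\beta})^{\wedge} = i\xi_r S_{\gamma\beta}$ and $(x_s k_{\gamma\alpha})^{\wedge} = i\partial_{\xi_s}\widehat k_{\gamma\alpha}$ hold only up to distributions supported at $\xi = 0$ (reflecting, for $n = 2$, the logarithmic term $c_{\gamma\beta}\log|x|$ in Theorem~\ref{FS-prop}(c)), so these origin-supported discrepancies must be tracked to confirm that the global distributional equality does force the pointwise identity \eqref{yfg-yLLL} on the punctured space.
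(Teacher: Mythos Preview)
The paper does not supply its own proof of Proposition~\ref{HGfs59+E}; the result is simply quoted from \cite{MaMiMi}. There is therefore no in-paper argument to compare against, and I will assess your proposal on its own merits.

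Your de Rham reading is correct and is the right conceptual picture: with $\omega^{\gamma\alpha}:=a^{\beta\alpha}_{rs}\xi_rS_{\gamma\beta}(\xi)\,d\xi_s$ on $\mathbb{R}^n\setminus\{0\}$, the closedness condition $d\omega^{\gamma\alpha}=0$ expands, via the product rule, to precisely \eqref{Ea4-fCii-n3}, and the single period over $S^1$ (for $n=2$) is precisely \eqref{Ea4-fCii-n2B}. This explains cleanly why the extra integral condition is needed only in two dimensions.

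The step you correctly flag as the obstacle, namely producing $k_{\gamma\alpha}$ as the inverse Fourier transform of the primitive $F_{\gamma\alpha}$, can be avoided altogether, and doing so removes the loose end. Rewrite $(b)$ \emph{on the physical side} as the family of pointwise identities
\[
H^{\gamma\alpha}_{s,s'}(x):=x_{s'}\,a^{\beta\alpha}_{rs}(\partial_rE_{\gamma\beta})(x)
-x_s\,a^{\beta\alpha}_{rs'}(\partial_rE_{\gamma\beta})(x)=0
\quad\text{on }\mathbb{R}^n\setminus\{0\};
\]
indeed, once these hold one defines $k_{\gamma\alpha}:=a^{\beta\alpha}_{rs}(\partial_rE_{\gamma\beta})/x_s$ on each set $\{x_s\neq0\}$, and the relations guarantee consistency on overlaps. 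Each $H^{\gamma\alpha}_{s,s'}$ is even, smooth on $\mathbb{R}^n\setminus\{0\}$, and homogeneous of degree $2-n$, hence locally integrable and tempered; so its pointwise vanishing on $\mathbb{R}^n\setminus\{0\}$ is the same as $\widehat{H^{\gamma\alpha}_{s,s'}}=0$ in $\mathscr{S}'(\mathbb{R}^n)$. Since $\partial_rE_{\gamma\beta}$ is odd and homogeneous of degree $1-n$, a homogeneity count shows $\widehat{\partial_rE_{\gamma\beta}}=i\xi_rS_{\gamma\beta}$ holds as an equality of \emph{tempered distributions} (no origin-supported correction, even when $n=2$), and hence $\widehat{H^{\gamma\alpha}_{s,s'}}$ is the distributional quantity $-\partial_{\xi_{s'}}\bigl[a^{\beta\alpha}_{rs}\xi_rS_{\gamma\beta}\bigr]+\partial_{\xi_s}\bigl[a^{\beta\alpha}_{rs'}\xi_rS_{\gamma\beta}\bigr]$. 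For $n\geq 3$ this coincides with the classical expression (the surface term at $|\xi|=\varepsilon$ vanishes because $a^{\beta\alpha}_{rs}\xi_rS_{\gamma\beta}$ has degree $-1>-(n-1)$), so $\widehat{H}=0$ is exactly \eqref{Ea4-fCii-n3}. For $n=2$ the distributional derivative of a degree~$-1$ function produces an additional $\delta_0$ term whose coefficient is
\[
\int_{S^1}\bigl(a^{\beta\alpha}_{rs}\xi_{s'}-a^{\beta\alpha}_{rs'}\xi_{s}\bigr)\xi_rS_{\gamma\beta}(\xi)\,d\sigma_{S^1}(\xi),
\]
i.e., exactly the quantity in \eqref{Ea4-fCii-n2B}. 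Thus the ``period'' emerges here as the coefficient of $\delta_0$ in a distributional derivative rather than via the Poincar\'e lemma, and no inverse Fourier transform of $F_{\gamma\alpha}$ is ever needed. With this modification your argument closes with no residual distributional gap.
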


\vskip 0.10in

In light of the properties of the fundamental solution, condition
\eqref{yfg-yLLL} readily implies that
\begin{equation}\label{kan-UHab-5}
k\in{\mathscr{C}}^\infty\bigl({\mathbb{R}}^{n}\setminus\{0\}\bigr)
\,\,\mbox{ and $k$ is even and homogeneous of degree $-n$}.
\end{equation}
Note that condition $(a)$ in Proposition~\ref{HGfs59+E} is entirely formulated
in terms of the coefficient tensor $A$. This suggests making the following
definition (recall that ${\mathfrak{A}}_L$ has been introduced in \eqref{AL-DDD}).

\begin{definition}\label{UaalpIKL}
Given a second-order elliptic system $L$ with constant complex coefficients
as in \eqref{L-def}-\eqref{L-ell.X}, call a coefficient tensor
\begin{equation}\label{kan-UHjnbbb}
A=\bigl(a_{rs}^{\alpha\beta}\bigr)
_{\substack{1\leq r,s\leq n\\ 1\leq\alpha,\beta\leq M}}\in{\mathfrak{A}}_L
\end{equation}
{\tt distinguished} provided condition $(a)$ in Proposition~\ref{HGfs59+E} holds,
and denote by ${\mathfrak{A}}_L^{dis}$ the totality of such distinguished
coefficient tensors for $L$, i.e.,
\begin{align}\label{AL-DDD.2}
{\mathfrak{A}}_L^{dis}:=\Big\{A=\bigl(a_{rs}^{\alpha\beta}\bigr)
_{\substack{1\leq r,s\leq n\\ 1\leq\alpha,\beta\leq M}}\in{\mathfrak{A}}_L:\,
& \mbox{conditions \eqref{Ea4-fCii-n3}-\eqref{Ea4-fCii-n2B} hold for each }\,\,
\nonumber\\[0pt]
& s,s'\in\{1,...,n\}\,\,\mbox{ and }\,\,\alpha,\gamma\in\{1,...,M\}\Big\}.
\end{align}
\end{definition}

\begin{remark}\label{Tavav-yag8}
We claim that ${\mathfrak{A}}_L^{dis}\not=\emptyset$ whenever $M=1$.
More specifically, when $M=1$, i.e., $L={\rm div}A\nabla$ with
$A=(a_{rs})_{1\leq r,s\leq n}\in{\mathbb{C}}^{n\times n}$, one has
$A_{\rm sym}\in {\mathfrak{A}}_L^{dis}$. To see that this is the case,
recall that checking the membership of $A_{\rm sym}$
to ${\mathfrak{A}}_L^{dis}$ comes down to verifying conditions
\eqref{Ea4-fCii-n3}-\eqref{Ea4-fCii-n2B} for the entries in the matrix $A_{\rm sym}$.
Note that for each index $s\in\{1,...,n\}$ we have in this case
\begin{equation}\label{AL-DDD.2ajan.2}
\partial_{\xi_s}\big[{\rm Symb}_L(\xi)\big]^{-1}=2\big[{\rm Symb}_L(\xi)\big]^{-2}
\big(A_{\rm sym}\xi\big)_s,\qquad\forall\,\xi\in{\mathbb{R}}^n\setminus\{0\},
\end{equation}
and \eqref{Ea4-fCii-n3} readily follows from this. Moreover, if $n=2$,
condition \eqref{Ea4-fCii-n2B} reduces to checking that
\begin{equation}\label{AL-DDD.2ajan.3}
\int_{S^1}\frac{\big(A_{\rm sym}\xi\big)\cdot(\xi_2,-\xi_1)}
{\big(A_{\rm sym}\xi\big)\cdot\xi}\,d\sigma_{S^1}(\xi)=0.
\end{equation}
The key observation in this regard is that if $f(\theta):=
\Big[\big(A_{\rm sym}\xi\big)\cdot\xi\Big]\Big|_{\xi=(\cos\theta,\,\sin\theta)}$ then
\begin{equation}\label{AL-DDD.2ajan.4}
\frac{\big(A_{\rm sym}\xi\big)\cdot(\xi_2,-\xi_1)}
{\big(A_{\rm sym}\xi\big)\cdot\xi}\Big|_{\xi=(\cos\theta,\,\sin\theta)}
=-\frac{f'(\theta)}{2f(\theta)},\qquad\forall\,\theta\in(0,2\pi).
\end{equation}
Now \eqref{AL-DDD.2ajan.3} readily follows from \eqref{AL-DDD.2ajan.4},
proving that indeed $A_{\rm sym}\in {\mathfrak{A}}_L^{dis}$.
\end{remark}

One of the main features of elliptic systems having a distinguished coefficient tensor
is that their Poisson kernels have a special form. This is made more precise in the
next proposition.

\begin{proposition}\cite{MaMiMiMi}\label{uniq:double->!Poisson}
Let $L$ be a constant coefficient system as in \eqref{L-def}-\eqref{L-ell.X}.
Assume that ${\mathfrak{A}}_L^{dis}\not=\emptyset$ and let
$k=\bigl\{k_{\gamma\alpha}\bigr\}_{1\leq\gamma,\alpha\leq M}:
{\mathbb{R}}^{n}\setminus\{0\}\to{\mathbb{C}}^{M\times M}$ be the function
appearing in condition $(b)$ of Proposition~\ref{HGfs59+E}. Then the unique Poisson
kernel for $L$ in $\mathbb{R}^{n}_+$ from Theorem \ref{ya-T4-fav} has the form
\begin{equation}\label{Yagav-g45}
P(x')=2k(x',1),\qquad\forall\,x'\in{\mathbb{R}}^{n-1}.
\end{equation}
\end{proposition}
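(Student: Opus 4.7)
The plan is to verify that the matrix-valued function $P(x'):=2k(x',1)$ satisfies the three defining properties of a Poisson kernel listed in Definition~\ref{defi:Poisson}, after which Proposition~\ref{uniqueness-Poisson} identifies $P$ with the unique Poisson kernel $P^L$ provided by Theorem~\ref{ya-T4-fav}. Throughout, I would introduce the auxiliary function $K(y):=2y_n k(y)$ for $y=(x',y_n)\in\mathbb{R}^n_{+}$; since $k$ is homogeneous of degree $-n$ (cf.~\eqref{kan-UHab-5}), $K$ is homogeneous of degree $1-n$, and hence $K(x',t)=t^{1-n}P(x'/t)=P_t(x')$ for every $(x',t)\in\mathbb{R}^n_{+}$.

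Property~(a) follows immediately from the smoothness of $k$ on $\mathbb{R}^n\setminus\{0\}$ and its homogeneity: writing $(x',1)=|(x',1)|\cdot\omega$ with $\omega\in S^{n-1}$ yields $|P(x')|\le 2(\sup_{S^{n-1}}|k|)(1+|x'|^2)^{-n/2}$. For property~(c), I would specialize the identity in Proposition~\ref{HGfs59+E}(b) to $s=n$, which gives $K_{\gamma\beta}(y)=2a^{\mu\beta}_{rn}(\partial_r E_{\gamma\mu})(y)$. Because $L=L_A$ has constant coefficients, commuting $L$ past the outer $\partial_r$ (legitimate on $\mathbb{R}^n_{+}$, where $E$ is smooth) and invoking the defining equation of $E$ yields
\[
(LK_{\cdot\beta})_\alpha(y)\;=\;2a^{\mu\beta}_{rn}\,\partial_{y_r}\bigl[(LE_{\cdot\mu})_\alpha(y)\bigr]\;=\;2a^{\alpha\beta}_{rn}\,\partial_{y_r}\delta_0(y),
\]
which vanishes throughout $\mathbb{R}^n_{+}$ since $\operatorname{supp}\delta_0=\{0\}$ is disjoint from $\mathbb{R}^n_{+}$.

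The crux is property~(b): $\int_{\mathbb{R}^{n-1}}P(x')\,dx'=I_{M\times M}$. The homogeneity of $K$ already gives $\int K(x',t)\,dx'=\int P(x')\,dx'$ for every $t>0$, so this constant matrix can be computed via the partial Fourier transform $\tilde K(\xi',t):=\int e^{-ix'\cdot\xi'}K(x',t)\,dx'$ evaluated at $\xi'=0$. Applying partial Fourier in $x'$ to the fundamental solution equation $\partial_r(a^{\alpha\gamma}_{rs}\partial_s E_{\gamma\beta})=\delta_{\alpha\beta}\delta_0$ and setting $\xi'=0$ produces the distributional ODE $A_{nn}\partial_t^2\tilde E(0,t)=I\,\delta(t)$, where $(A_{nn})_{\alpha\beta}:=a^{\alpha\beta}_{nn}$ is invertible (take $\xi=e_n$ in \eqref{L-ell.X}). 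Combined with the evenness $\tilde E(0,-t)=\tilde E(0,t)$ inherited from $E(-y)=E(y)$, this pins down $\partial_t\tilde E(0,t)=\tfrac12\operatorname{sgn}(t)A_{nn}^{-1}$. Fourier-transforming the representation $K_{\gamma\beta}=2a^{\mu\beta}_{rn}(\partial_r E_{\gamma\mu})$ in $x'$ and passing to $\xi'\to 0$ (the $r<n$ terms carry a factor $i\xi_r'$ and drop out in the limit), I obtain, for $t>0$,
\[
\tilde K_{\gamma\beta}(0,t)\;=\;2a^{\mu\beta}_{nn}\,\partial_t\tilde E_{\gamma\mu}(0,t)\;=\;(A_{nn}^{-1}A_{nn})_{\gamma\beta}\;=\;\delta_{\gamma\beta},
\]
as required.

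The main obstacle is rigorously handling the $\xi'\to 0$ limit in the Fourier computation, because $\tilde E(\xi',t)$ typically exhibits a $1/|\xi'|$ singularity at $\xi'=0$ (illustrated by $\tilde E^\Delta(\xi',t)=-e^{-t|\xi'|}/(2|\xi'|)$ for the Laplacian). I would address this by working distributionally and by exploiting the fact that the leading singular part of $\tilde E(\xi',t)$ is $t$-independent and is therefore annihilated by $\partial_t$, so that $\partial_t\tilde E(\xi',t)$ extends continuously to $\xi'=0$; meanwhile, the tangential contributions $i\xi_r'\tilde E_{\gamma\mu}(\xi',t)$ combine, via the contraction with $a^{\mu\beta}_{rn}$, into a quantity that is continuous at $\xi'=0$ and vanishes there. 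Once the three properties of Definition~\ref{defi:Poisson} are verified, Proposition~\ref{uniqueness-Poisson} gives $P=P^L$, which is precisely the claimed identity \eqref{Yagav-g45}.
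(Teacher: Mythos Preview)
The paper does not actually prove this proposition; it is quoted from the companion work \cite{MaMiMiMi} (note the citation immediately after the \texttt{proposition} declaration), so there is no in-paper argument to compare against. I will therefore assess your proposal on its own merits.

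Your overall strategy---verify (a), (b), (c) of Definition~\ref{defi:Poisson} for $P(x'):=2k(x',1)$ and then invoke Proposition~\ref{uniqueness-Poisson}---is sound, and your treatments of (a) and (c) are correct. The verification of (b) is where the real work lies, and here your argument has a genuine gap. You split $K_{\gamma\beta}=2a^{\mu\beta}_{rn}\partial_r E_{\gamma\mu}$ into the $r=n$ piece and the $r<n$ pieces and take the partial Fourier transform of each separately. The problem is that, for a general elliptic system, the individual functions $\partial_r E_{\gamma\mu}(\cdot,t)$ are only $O(|x'|^{1-n})$ at infinity (homogeneity of degree $1-n$ in $(x',t)$ gives $\partial_r E(x',t)\sim |x'|^{1-n}g_r(x'/|x'|,0)$, and there is no reason for $g_r(\omega',0)$ to vanish unless $L=\Delta$), hence need not lie in $L^1(\mathbb{R}^{n-1})$. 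It is precisely the \emph{combination} $K_{\gamma\alpha}=2x_n k_{\gamma\alpha}$---available only because $A\in\mathfrak{A}_L^{dis}$---that enjoys the extra factor of $x_n$ giving $O(|x'|^{-n})$ decay and integrability. Thus your termwise evaluation of $\widetilde K(0,t)$, and in particular the claim that the tangential contributions ``vanish at $\xi'=0$,'' is not justified as written: those contributions are not even well-defined pointwise at $\xi'=0$, and your remark that the singular part of $\widetilde E(\xi',t)$ is $t$-independent, while suggestive, is not substantiated for general $L$.

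A cleaner route to (b) that avoids this issue is to work with $K$ itself rather than its constituent pieces. Since you have already secured (a) and (c), for any $f\in L^p(\mathbb{R}^{n-1})$ the function $u(x',t):=(P_t\ast f)(x')$ satisfies $Lu=0$, $\mathcal{N}u\in L^p$, and (by the standard approximate-identity argument, using only (a) and the finiteness of $C:=\int P$) has nontangential trace $Cf$. Theorem~\ref{YF-TY.6yh} then forces $P_t\ast f=P^L_t\ast(Cf)$ for all $f$, i.e., $P=P^L C$. To conclude $C=I$ one still needs one further scalar identity; this is where the special algebraic structure \eqref{yfg-yLLL} must be used again, and is presumably the content of the argument in \cite{MaMiMiMi}. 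Your Fourier heuristic points in the right direction, but as it stands the passage to $\xi'=0$ requires more care than you have provided.
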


\section{The Dirichlet problem with data in higher order Sobolev spaces}
\setcounter{equation}{0}
\label{S-6}

The main result of our paper is the following theorem giving the well-posedness
of the Dirichlet boundary value problem in ${\mathbb{R}}^n_{+}$ with data in
higher-order Sobolev spaces for constant (complex) coefficient elliptic systems
possessing a distinguished coefficient tensor.

\begin{theorem}\label{them:Dir-l}
Let $L$ be an operator as in \eqref{L-def}-\eqref{L-ell.X} with the property
that ${\mathfrak{A}}_L^{dis}\not=\emptyset$, and fix $p\in(1,\infty)$ and
$\ell\in\mathbb{N}_0$. Then the $\ell$-th order Dirichlet boundary value problem
for $L$ in $\mathbb{R}^{n}_+$,
\begin{equation}\label{Dir-BVP-pLL:l}
\left\{
\begin{array}{l}
Lu=0\,\,\mbox{ in }\,\,\mathbb{R}^{n}_+,
\\[6pt]
{\mathcal{N}}(\nabla^k u)\in L^p(\partial\mathbb{R}^{n}_+),\,\,0\le k\leq\ell,
\\[6pt]
u\bigl|_{\partial\mathbb{R}^{n}_{+}}^{{}^{n.t.}}=f\in L^p_\ell(\mathbb{R}^{n-1}),
\end{array}
\right.
\end{equation}
has a unique solution. Moreover, the solution $u$ of \eqref{Dir-BVP-pLL:l}
is given by
\begin{equation}\label{eqn-Dir-l:u}
u(x',t)=(P_t*f)(x'),\qquad\forall\,(x',t)\in{\mathbb{R}}^n_{+},
\end{equation}
where $P$ is the Poisson kernel for $L$ in $\mathbb{R}^{n}_+$ from
Theorem~\ref{ya-T4-fav}. Furthermore, there exists a
constant $C=C(n,p,L,\ell)\in(0,\infty)$ with the property that
\begin{equation}\label{Dir-BVP-p2F.ii:l}
\sum_{k=0}^\ell \big\|{\mathcal{N}}(\nabla^k u)\big\|_{L^p(\partial\mathbb{R}^{n}_+)}
\leq C\|f\|_{L^p_\ell(\mathbb{R}^{n-1})}.
\end{equation}
\end{theorem}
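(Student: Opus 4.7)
\emph{Candidate solution, existence, and uniqueness.} I would take $u(x',t):=(P_t*f)(x')$ with $P$ the Poisson kernel from Theorem~\ref{ya-T4-fav} as the candidate solution. Both $Lu=0$ in ${\mathbb{R}}^n_{+}$ and $u\bigl|_{\partial\mathbb{R}^{n}_{+}}^{{}^{n.t.}}=f$ a.e.\ on ${\mathbb{R}}^{n-1}$ follow immediately from Definition~\ref{defi:Poisson} and the observations recorded in \eqref{smetg}. Uniqueness requires no additional ingredient: if $u_1,u_2$ both solve \eqref{Dir-BVP-pLL:l} with the same $f$, their difference $v:=u_1-u_2$ satisfies $Lv=0$, $\mathcal{N}v\in L^p(\partial\mathbb{R}^n_+)$, and $v\bigl|_{\partial\mathbb{R}^{n}_{+}}^{{}^{n.t.}}=0$, so the $\ell=0$ uniqueness in Theorem~\ref{YF-TY.6yh} forces $v\equiv 0$. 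Consequently, the whole weight of the theorem rests on the estimate \eqref{Dir-BVP-p2F.ii:l}, which I plan to establish by a three-tier reduction of $\mathcal{N}(\partial^\gamma u)$ according to the number of normal derivatives in $\partial^\gamma$.

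\emph{Tangential derivatives and one normal derivative.} For multi-indices $\alpha$ with $\alpha_n=0$ and $|\alpha|\leq\ell$, the convolution structure yields $\partial^\alpha u=P_t*\partial^\alpha f$, and Theorem~\ref{YF-TY.6yh} applied to $\partial^\alpha f\in L^p({\mathbb{R}}^{n-1})$ bounds $\|\mathcal{N}(\partial^\alpha u)\|_{L^p}$ by $C\|f\|_{L^p_\ell({\mathbb{R}}^{n-1})}$. For mixed derivatives with exactly one $\partial_t$, I would set $g:=\partial_{x'}^{\alpha'}f\in L^p_1({\mathbb{R}}^{n-1})$ with $|\alpha'|\leq\ell-1$, and use \eqref{ioaTga} together with an integration by parts to rewrite
\begin{equation*}
\partial_t\bigl(P_t*g\bigr)(x')=-\sum_{j=1}^{n-1}\Bigl(\tfrac{x_j}{t}P_t\Bigr)*(\partial_j g)(x').
\end{equation*}
A Cotlar-type argument, exactly as indicated in \eqref{tghn-PPP}, then supplies the pointwise majorization
\begin{equation*}
\mathcal{N}\bigl(\partial_t(P_t*g)\bigr)\leq C\sum_{j=1}^{n-1}T^{(j)}_\star(\partial_j g)+C\mathcal{M}(\nabla'g),
\end{equation*}
with $T^{(j)}_\star$ and $k_j$ as in \eqref{tghn-PPP.2}--\eqref{tghn-PPP.3}. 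The hypothesis ${\mathfrak{A}}_L^{dis}\neq\emptyset$ enters decisively here: Proposition~\ref{uniq:double->!Poisson} gives $P=2k(\,\cdot\,,1)$ with $k$ even by \eqref{kan-UHab-5}, so $k_j(x')=2x_j k(x',0)$ is an odd, $\mathscr{C}^\infty$, degree $(1-n)$-homogeneous kernel on ${\mathbb{R}}^{n-1}\setminus\{0'\}$. Its oddness is exactly the cancellation \eqref{tghn-PPP.5}, placing $T^{(j)}_\star$ within classical Calder\'on--Zygmund theory, hence bounded on $L^p({\mathbb{R}}^{n-1})$. Combined with the $L^p$-boundedness of $\mathcal{M}$, this yields $\|\mathcal{N}(\partial_t(P_t*g))\|_{L^p}\leq C\|f\|_{L^p_\ell({\mathbb{R}}^{n-1})}$.

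\emph{Higher-order normal derivatives via the PDE, and the main obstacle.} For $|\gamma|\leq\ell$ with $\gamma_n\geq 2$, I would use $Lu=0$ to trade pairs of normal derivatives for tangential ones: ellipticity \eqref{L-ell.X} (specialized to $\xi=e_n$) forces the matrix $(a^{\alpha\beta}_{nn})_{\alpha,\beta}$ to be invertible, so the relation $a^{\alpha\beta}_{rs}\partial_r\partial_s u_\beta=0$ solves algebraically for $\partial_t^2 u_\beta$ as a linear combination of $\partial_{x'}\partial_t u$ and $\partial_{x'}^2 u$. Iterating this identity decomposes every $\partial^\gamma u$ with $|\gamma|\leq\ell$ into a finite sum of $\partial_{x'}^\beta u$ with $|\beta|\leq\ell$ and $\partial_{x'}^\beta\partial_t u$ with $|\beta|\leq\ell-1$, both of which are controlled by the previous step; summing delivers \eqref{Dir-BVP-p2F.ii:l}. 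The main obstacle I anticipate is securing the second step: since $\frac{x_j}{t}P_t(x')$ only decays like $|x'|^{1-n}$ at infinity (cf.\ \eqref{tghn-jan-1B}), no direct Young-type estimate applies, and the success of the Cotlar decomposition hinges on the spherical cancellation $\int_{S^{n-2}}k_j\,d\sigma=0$, which is precisely what the distinguished-coefficient-tensor hypothesis guarantees.
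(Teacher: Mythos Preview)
Your proof is correct, but it takes a genuinely different route from the paper's. The paper never invokes the Cotlar-type bound \eqref{tghn-PPP} (which appears in the introduction only as motivation) nor the trick of reducing $\partial_t^2u$ via $Lu=0$ and the invertibility of $A_{nn}$. Instead, it introduces auxiliary kernels $Q^{(j)}(x'):=(\partial_j E)(x',1)$ built from the fundamental solution (Definition~\ref{kernelsQj-D}) and, using the distinguished-tensor identity \eqref{yfg-yLLL}, proves a chain of algebraic relations (Propositions~\ref{kernelsQj}--\ref{kernelsQj-Succ}) that inductively express every $\nabla^k u$ as a finite linear combination of terms $P_t*(M_A\partial_\tau^k f)$ and $Q^{(s)}_t*(M_A\partial_\tau^k f)$ with $s\le n-1$ (Proposition~\ref{KfhI+G}). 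The $L^p$ bound on $\mathcal{N}(Q^{(s)}_t*g)$ then follows from Calder\'on--Zygmund theory applied to the odd, degree-$(1-n)$-homogeneous kernels $\partial_s E$ on $\mathbb{R}^n\setminus\{0\}$ (Lemma~\ref{RTHb}(c)).

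What your approach buys is economy: you bypass the $Q^{(j)}$ machinery entirely, handling one normal derivative through the maximal singular integrals $T^{(j)}_\star$ (whose boundedness you correctly obtain from the oddness of $k_j(x')=2x_jk(x',0)$ via \eqref{kan-UHab-5}) and then reducing all higher normal derivatives to at most one by iterating the equation. What the paper's approach buys is an explicit structural formula \eqref{VfX-2:a} for $\nabla^k u$ as a sum of two concrete convolution operators, information of independent interest beyond the norm estimate. Both routes ultimately invoke the distinguished-tensor hypothesis to manufacture odd Calder\'on--Zygmund kernels, but through different objects: you use $x_jk(\cdot,0)$ on $\mathbb{R}^{n-1}\setminus\{0'\}$, whereas the paper uses $\partial_jE$ on $\mathbb{R}^n\setminus\{0\}$.
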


The remainder of this section is devoted to providing a proof for
Theorem~\ref{them:Dir-l}. This requires developing a number of tools,
which are introduced and studied first.

To fix notation let $\nabla_{x'}:=(\partial_1,\dots,\partial_{n-1})$ and,
alternatively, use $\partial_t$ in place of $\partial_n$ if the description
$(x',t)$ of points in ${\mathbb{R}}^{n-1}\times(0,\infty)$ is emphasized in
place of $x\in{\mathbb{R}}^n_{+}$. Also fix $p\in(1,\infty)$, $\ell\in{\mathbb{N}}$,
and let $f\in L^p_\ell(\mathbb{R}^{n-1})$. In view of Theorem~\ref{YF-TY.6yh},
proving Theorem~\ref{them:Dir-l} reduces to showing that the function
$u(x',t)=(P_t*f)(x')$ for $(x',t)\in{\mathbb{R}}^n_{+}$ satisfies
${\mathcal{N}}(\nabla^k u)\in L^p(\partial\mathbb{R}^{n}_+)$ for
$k=1,\dots,\ell$, as well as \eqref{Dir-BVP-p2F.ii:l}.
Suppose $\alpha=(\alpha_1,...,\alpha_n)\in{\mathbb{N}}_0$ is such that $|\alpha|\leq\ell$.
It is immediate that if $\alpha_n=0$ then
$\partial^\alpha u(x',t)=\big(P_t*(\partial^\alpha f)\big)(x')$
for $(x',t)\in{\mathbb{R}}^n_{+}$. The crux of the matter is handling
$\partial^\alpha u$ when $\alpha_n\not=0$. As you will see below, the special
format of the Poisson kernel guaranteed by Proposition~\ref{uniq:double->!Poisson}
allows us to prove a set of basic identities expressing
$\partial_t^k \big[(P_t* f)(x')\big]$ as a linear combination of
$(P_t* \nabla^k_{x'} f)(x')$ and convolutions of certain auxiliary kernels
with derivatives of $f$. Here is the class of auxiliary kernels just alluded to.

\begin{definition}\label{kernelsQj-D}
Given an operator $L$ as in \eqref{L-def}-\eqref{L-ell.X} denote by $E$ the
fundamental solution for $L$ from Theorem~\ref{FS-prop}. Then for each
$j\in\{1,\dots,n\}$ define the auxiliary matrix-valued kernel function
\begin{equation}\label{kernelQ-1}
Q^{(j)}(x')
:=\Big(Q^{(j)}_{\alpha\beta}(x')\Big)_{1\leq\alpha,\beta\leq M}
:=\Big((\partial_j E_{\alpha\beta})(x',1)\Big)_{1\leq\alpha,\beta\leq M},
\qquad\forall\,x'\in{\mathbb{R}}^{n-1}.
\end{equation}
\end{definition}

In the next lemma we describe some of the basic
properties of the auxiliary kernels just introduced.

\begin{lemma}\label{RTHb}
Let $L$ be an operator as in \eqref{L-def}-\eqref{L-ell.X} and let
$\big\{Q^{(j)}_{\alpha\beta}\big\}_{j,\alpha,\beta}$ be the family of functions
from \eqref{kernelQ-1}. Then the following are true.
\begin{list}{$(\theenumi)$}{\usecounter{enumi}\leftmargin=.8cm
\labelwidth=.8cm\itemsep=0.2cm\topsep=.1cm
\renewcommand{\theenumi}{\alph{enumi}}}
\item There exists some constant $C=C(n,L)\in(0,\infty)$ such that for each indices
$j\in\{1,\dots,n\}$ and $\alpha,\beta\in\{1,\dots,M\}$ one has
\begin{equation}\label{kernelQ-2}
Q^{(j)}_{\alpha\beta}\in {\mathscr{C}}^\infty(\mathbb{R}^{n-1})\,\,\mbox{ and }\,\,
\Big|Q^{(j)}_{\alpha\beta}(x')\Big|\leq\frac{C}{(|x'|+1)^{n-1}}\quad
\forall\,x'\in{\mathbb{R}}^{n-1}.
\end{equation}
\item For each $j,r\in\{1,\dots,n\}$ and every
$\alpha,\gamma\in\{1,\dots,M\}$ we have
\begin{equation}\label{kernelQ-4}
\partial_j\Big[\big(Q^{(r)}_{\alpha\gamma}\big)_t(x')\Big]
=\partial_r\Big[\big(Q^{(j)}_{\alpha\gamma}\big)_t(x')\Big],\qquad
\forall\,(x',t)\in{\mathbb{R}}^n_{+}.
\end{equation}
\item Given any $f\in L^p({\mathbb{R}}^{n-1})$ where $p\in(1,\infty)$,
along with $j\in\{1,\dots,n\}$ and $\alpha,\beta\in\{1,\dots,M\}$, define the function
\begin{equation}\label{GTdS}
u^{(j)}_{\alpha\beta}:{\mathbb{R}}^n_{+}\to {\mathbb{C}},\quad
u^{(j)}_{\alpha\beta}(x',t):=\Big[\big(Q^{(j)}_{\alpha\beta}\big)_t\ast f\Big](x'),
\quad\forall\,(x',t)\in{\mathbb{R}}^n_{+}.
\end{equation}
Then there exists a constant $C\in(0,\infty)$ independent of $f$ such that
\begin{equation}\label{GTdS-2}
\big\|{\mathcal{N}}u^{(j)}_{\alpha\beta}\big\|_{L^p({\mathbb{R}}^{n-1})}
\leq C\|f\|_{L^p({\mathbb{R}}^{n-1})}.
\end{equation}
\end{list}
\end{lemma}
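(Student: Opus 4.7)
Parts (a) and (b) follow readily from the structural properties of the fundamental solution $E$ recorded in Theorem~\ref{FS-prop}. For (a), since $(x',1)\in\mathbb{R}^n\setminus\{0\}$ for every $x'\in\mathbb{R}^{n-1}$ and each $E_{\alpha\beta}$ is of class $\mathscr{C}^\infty$ away from the origin, the map $x'\mapsto(\partial_jE_{\alpha\beta})(x',1)$ is automatically smooth on $\mathbb{R}^{n-1}$, and the pointwise decay follows from the bound $|\partial^\gamma E(x)|\leq C_\gamma/|x|^{n-1}$ with $|\gamma|=1$ (valid in every dimension $n\geq 2$) evaluated at $(x',1)$, together with the elementary comparison $(|x'|^2+1)^{1/2}\geq\tfrac{1}{\sqrt{2}}(|x'|+1)$. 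For (b), I would exploit that $\partial_jE_{\alpha\beta}$ is positively homogeneous of degree $1-n$ on $\mathbb{R}^n\setminus\{0\}$: the logarithmic term in \eqref{fs-str} appears only when $n=2$, and its first partial derivatives are themselves homogeneous of degree $-1=1-n$. This homogeneity yields
\begin{equation*}
\bigl(Q^{(j)}_{\alpha\beta}\bigr)_t(x')=t^{1-n}(\partial_jE_{\alpha\beta})(x'/t,1)=(\partial_jE_{\alpha\beta})(x',t)\qquad\forall\,(x',t)\in\mathbb{R}^n_+,
\end{equation*}
so that \eqref{kernelQ-4} reduces to the equality of mixed partials of $E_{\alpha\gamma}$ on the open set $\mathbb{R}^n_+$.

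Part (c) is the substantive portion, and my plan is to run a Cotlar-type argument producing a pointwise domination of $\mathcal{N}u^{(j)}_{\alpha\beta}$ by the Hardy--Littlewood maximal function $\mathcal{M}f$ and by a maximal truncated singular integral on $\mathbb{R}^{n-1}$. Fix $x'\in\mathbb{R}^{n-1}$ and $(y',s)\in\Gamma_\kappa(x')$. Using the identity from (b), write $u^{(j)}_{\alpha\beta}(y',s)=\int_{\mathbb{R}^{n-1}}(\partial_jE_{\alpha\beta})(y'-z',s)f(z')\,dz'$ and split the integration at $|y'-z'|=s$. The local region $\{|y'-z'|<s\}$ is handled by the crude bound $|(\partial_jE_{\alpha\beta})(y'-z',s)|\leq Cs^{1-n}$ combined with the inclusion $B(y',s)\subset B(x',(\kappa+1)s)$, producing a contribution at most $C\mathcal{M}f(x')$. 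On the far region $\{|y'-z'|\geq s\}$, I introduce the boundary kernel $k_j(x'):=(\partial_jE_{\alpha\beta})(x',0)$, defined for $x'\in\mathbb{R}^{n-1}\setminus\{0\}$; it is smooth, homogeneous of degree $1-n=-(n-1)$, and---this is the pivotal structural property---odd, since the evenness $E(-x)=E(x)$ from Theorem~\ref{FS-prop}(a) forces $\partial_jE$ to be odd, and its restriction to the last coordinate zero is therefore odd in $x'$. Consequently $\int_{S^{n-2}}k_j(\omega')\,d\sigma(\omega')=0$, so $k_j$ is a bona fide Calder\'on--Zygmund kernel on $\mathbb{R}^{n-1}$, and its associated maximal singular integral $T^*_j$ is bounded on $L^p(\mathbb{R}^{n-1})$ for every $p\in(1,\infty)$.

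To dominate the far integral, write $(\partial_jE_{\alpha\beta})(y'-z',s)-k_j(y'-z')=\int_0^s(\partial_t\partial_jE_{\alpha\beta})(y'-z',\tau)\,d\tau$, where $\partial_t$ denotes the derivative in the last variable; the second-order estimate of Theorem~\ref{FS-prop}(d) yields $|(\partial_jE_{\alpha\beta})(y'-z',s)-k_j(y'-z')|\leq Cs/|y'-z'|^n$ on the far region, so integration against $|f|$ contributes at most $C\mathcal{M}f(x')$. The remaining piece $\int_{|y'-z'|\geq s}k_j(y'-z')f(z')\,dz'$ is then compared, via a routine argument leveraging the smoothness and homogeneity of $k_j$ on $\mathbb{R}^{n-1}\setminus\{0\}$, to the truncated singular integral $\int_{|x'-z'|>(1+2\kappa)s}k_j(x'-z')f(z')\,dz'$, with the discrepancy (stemming from the symmetric difference of the two truncation regions and from replacing $k_j(y'-z')$ by $k_j(x'-z')$) accounting for at most a further $C\mathcal{M}f(x')$. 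Assembling all these estimates and taking the supremum over $(y',s)\in\Gamma_\kappa(x')$ produces the pointwise bound
\begin{equation*}
\bigl(\mathcal{N}u^{(j)}_{\alpha\beta}\bigr)(x')\leq C\bigl(\mathcal{M}f(x')+T^*_jf(x')\bigr),
\end{equation*}
from which \eqref{GTdS-2} follows via the $L^p$-boundedness of $\mathcal{M}$ and $T^*_j$. The chief technical difficulty I anticipate is the careful bookkeeping required to transfer both the truncation parameter and the center of the singular integral from the moving point $y'$ to the fixed evaluation point $x'$---a mechanism that is standard within Cotlar-type inequalities but must be implemented uniformly across all indices $j\in\{1,\ldots,n\}$ (so that it handles simultaneously the tangential and the normal directions).
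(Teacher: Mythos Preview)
Your proposal is correct and takes essentially the same approach as the paper. For parts (a) and (b) the arguments are identical, resting on the smoothness, decay, and homogeneity of $\partial_j E_{\alpha\beta}$ together with the identity $\bigl(Q^{(j)}_{\alpha\beta}\bigr)_t(x')=(\partial_jE_{\alpha\beta})(x',t)$; for part (c) the paper simply records that $K:=\partial_jE_{\alpha\beta}$ is smooth on $\mathbb{R}^n\setminus\{0\}$, odd, and homogeneous of degree $1-n$, and then invokes ``standard Calder\'on--Zygmund theory'' as a black box, whereas you unpack that black box via the Cotlar-type pointwise bound $\mathcal{N}u^{(j)}_{\alpha\beta}\leq C(\mathcal{M}f+T^*_jf)$---so the difference is one of detail rather than strategy.
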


\begin{proof}
Let $E$ be the fundamental solution for $L$ defined in Theorem~\ref{FS-prop}.
The fact that the claims in $(a)$ hold is a consequence of \eqref{kernelQ-1},
and Theorem \ref{FS-prop} parts $(a)$ and $(d)$. Next, fix $j\in\{1,\dots,n\}$,
$\alpha,\beta\in\{1,\dots,M\}$ and let $(x',t)\in{\mathbb{R}}^n_{+}$. Since
$\nabla E$ is positive homogeneous of order $1-n$ in ${\mathbb{R}}^n\setminus\{0\}$
(cf. property $(c)$ in Theorem~\ref{FS-prop}), one has
\begin{equation}\label{efgug-3}
\big(Q^{(r)}_{\gamma\beta}\big)_t(x')
=t^{1-n}(\partial_r E_{\gamma\beta})(x'/t,1)
=\big(\partial_r E_{\gamma\beta}\big)(x',t),\qquad\forall\,r\in\{1,\dots,n\}.
\end{equation}
Now \eqref{efgug-3} and the first condition in \eqref{kernelQ-2} imply that for every
$j,r\in\{1,\dots,n\}$,
\begin{equation}\label{efgug-4}
\partial_j\Big[\big(Q^{(r)}_{\alpha\gamma}\big)_t(x')\Big]
=\big(\partial_j\partial_r E_{\gamma\beta}\big)(x',t)
=\big(\partial_r\partial_j E_{\gamma\beta}\big)(x',t)
=\partial_r\Big[\big(Q^{(j)}_{\alpha\gamma}\big)_t(x')\Big],
\end{equation}
proving \eqref{kernelQ-4}.

There remains to prove the claim in $(c)$. To this end, let
$f\in L^p({\mathbb{R}}^{n-1})$ for some $p\in(1,\infty)$.
Then by \eqref{GTdS} and \eqref{efgug-3} we have
\begin{equation}\label{GTdS-3}
u^{(j)}_{\alpha\beta}(x',t)=\int_{{\mathbb{R}}^{n-1}}
(\partial_j E_{\alpha\beta})(x'-y',t)f(y')\,dy',
\qquad\forall\,(x',t)\in{\mathbb{R}}^n_{+}.
\end{equation}
If we now write $K=\partial_j E_{\alpha\beta}$, the properties of $E$
(cf. Theorem~\ref{FS-prop}) imply that
$K\in\mathscr{C}^\infty({\mathbb{R}}^{n}\setminus\{0\})$ with $K(-x)=-K(x)$ and
$K(\lambda\,x)=\lambda^{-(n-1)}K(x)$ for every $\lambda>0$ and $x\in{\mathbb{R}}^{n}\setminus\{0\}$. We can therefore invoke standard
Calder\'on-Zygmund theory and conclude that \eqref{GTdS-2} holds.
\end{proof}

In order to elaborate on the relationship between the family of auxiliary
kernels from Definition~\ref{kernelsQj-D} and the Poisson kernel
for the operator $L$ in ${\mathbb{R}}^n_{+}$, under the assumption
${\mathfrak{A}}_L^{dis}\not=\emptyset$, we first need to introduce some notation
which facilitates the subsequent discussion. Specifically,
given a coefficient tensor $A=\bigl(a_{rs}^{\alpha\beta}\bigr)_{r,s,\alpha,\beta}$ with complex entries satisfying the Legendre-Hadamard ellipticity condition \eqref{L-ell.X},
for each $r,s\in\{1,\dots,n\}$ abbreviate
\begin{equation}\label{eq2.13ttt.XX}
A_{rs}:=\Bigl(a^{\alpha\beta}_{rs}\Bigr)_{1\leq\alpha,\beta\leq M}.
\end{equation}
Note that the ellipticity condition \eqref{L-ell.X} written for
$\xi:={\bf e}_n\in{\mathbb{R}}^n$ yields, in particular, that  $A_{nn}=\Bigl(a^{\alpha\beta}_{nn}\Bigr)_{1\leq\alpha,\beta\leq M}
\in{\mathbb{C}}^{M\times M}$ is an invertible matrix. Next, for each sufficiently
smooth vector field $u=(u_\beta)_{1\leq\beta\leq M}$, define
\begin{equation}\label{Tang-partT-2}
D_{A}u:=\Bigl(a_{ns}^{\alpha\beta}\partial_s u_\beta\Bigr)_{1\leq\alpha\leq M},
\end{equation}
and set (with the superscript $\top$ denoting transposition)
\begin{equation}\label{Tang-partT-2B}
\partial_{\rm tan} u:=-\big(A^{\top}_{nn}\big)^{-1}\,
\Big[\Big(\sum_{s=1}^{n-1}a_{sn}^{\beta\alpha}\partial_s u_\beta\Big)
_{1\leq\alpha\leq M}\Big].
\end{equation}
The notation $\partial_{\rm tan}$ is justified by the fact that its expression only
involves partial derivatives in directions tangent to the boundary of the upper-half
space $\partial{\mathbb{R}}^n_{+}$.

For reasons that will become clear momentarily, we are interested in decomposing
the operator $\partial_t(=\partial_n)$ as the sum between a linear combination
of the partial derivative operators $\partial_j$, $j=1,\dots,n-1$, (which correspond
to tangential directions to $\partial{\mathbb{R}}^n_{+}$) and a suitable (matrix)
multiple of $D_{A^{\!\top}}$.

\begin{lemma}\label{decomp-pt}
One has $\partial_t=\partial_{\rm tan}+\big(A_{nn}^\top\big)^{-1}D_{A^{\!\top}}.$
\end{lemma}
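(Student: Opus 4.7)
The identity is ultimately a direct book-keeping computation, so my plan is simply to unwind the definitions of $D_{A^{\!\top}}$ and $\partial_{\rm tan}$ and observe a clean cancellation. The only care needed is keeping the transpose convention straight: since $A^{\top}=\bigl(a^{\beta\alpha}_{sr}\bigr)_{r,s,\alpha,\beta}$, the entries of $A^{\top}$ in the notation $(a^{\top})^{\alpha\beta}_{rs}$ are $a^{\beta\alpha}_{sr}$, and in particular $(A^{\top})_{ns}^{\alpha\beta}=a^{\beta\alpha}_{sn}$ while $A_{nn}^{\top}=\bigl(a^{\beta\alpha}_{nn}\bigr)_{\alpha,\beta}$.

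First I would apply the definition \eqref{Tang-partT-2} to $A^{\!\top}$ in place of $A$, which gives
\begin{equation*}
D_{A^{\!\top}}u=\Bigl(a^{\beta\alpha}_{sn}\partial_s u_\beta\Bigr)_{1\leq\alpha\leq M}
=\Bigl(\sum_{s=1}^{n-1}a^{\beta\alpha}_{sn}\partial_s u_\beta\Bigr)_{1\leq\alpha\leq M}
+\Bigl(a^{\beta\alpha}_{nn}\partial_n u_\beta\Bigr)_{1\leq\alpha\leq M},
\end{equation*}
where I have split off the $s=n$ contribution. The second term on the right-hand side is precisely $A_{nn}^{\top}(\partial_t u)$ by the very definition of $A_{nn}^{\top}$.

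Next I would note that, thanks to the Legendre–Hadamard ellipticity condition \eqref{L-ell.X} tested on $\xi=\mathbf{e}_n$, the matrix $A_{nn}^{\top}$ is invertible (its symmetric part in the Hermitian sense is positive definite on $\mathbb{C}^M$). Multiplying the previous display on the left by $\bigl(A_{nn}^{\top}\bigr)^{-1}$ therefore yields
\begin{equation*}
\bigl(A_{nn}^{\top}\bigr)^{-1}D_{A^{\!\top}}u=\bigl(A_{nn}^{\top}\bigr)^{-1}\Bigl[\Bigl(\sum_{s=1}^{n-1}a^{\beta\alpha}_{sn}\partial_s u_\beta\Bigr)_{1\leq\alpha\leq M}\Bigr]+\partial_t u.
\end{equation*}
Comparing the first term on the right-hand side with the definition \eqref{Tang-partT-2B} of $\partial_{\rm tan}u$, it is exactly $-\partial_{\rm tan}u$. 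Rearranging gives $\partial_t u=\partial_{\rm tan}u+\bigl(A_{nn}^{\top}\bigr)^{-1}D_{A^{\!\top}}u$, as desired. There is no real obstacle here beyond the indexing gymnastics; the invertibility of $A_{nn}^{\top}$, which is the only non-formal input, is an immediate consequence of ellipticity.
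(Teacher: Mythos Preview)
Your proof is correct and follows essentially the same approach as the paper: both arguments simply unwind the definitions of $D_{A^{\!\top}}$ and $\partial_{\rm tan}$, isolate the $s=n$ term (which produces $A_{nn}^{\top}\partial_t u$), and observe the resulting cancellation after multiplying by $(A_{nn}^{\top})^{-1}$. The only cosmetic difference is that the paper begins with $\partial_t u-(A_{nn}^{\top})^{-1}D_{A^{\!\top}}u$ and shows it equals $\partial_{\rm tan}u$, whereas you start from $D_{A^{\!\top}}u$ and rearrange; the content is identical.
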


\begin{proof}
Given $u=(u_\beta)_{1\leq\beta\leq M}\in {\mathscr{C}}^1({\mathbb{R}}^n_{+})$ we may write
\begin{align}\label{eqn:partial-t:tan}
\partial_t u-\big(A_{nn}^\top\big)^{-1}D_{A^{\!\top}}u
&=\big(A^{\top}_{nn}\big)^{-1}\,
\Big[A^{\top}_{nn}\partial_t u-D_{\,A^{\!\top}}\,u\Big]
\nonumber\\[4pt]
&=\big(A^{\top}_{nn}\big)^{-1}\,
\Big[\big(a_{nn}^{\beta \alpha} \partial_t u_\beta-a_{sn}^{\beta\alpha}
\partial_s u_\beta\big)_{1\leq\alpha\leq M}\Big]
\nonumber\\[4pt]
&=-\big(A^{\top}_{nn}\big)^{-1}\,
\Big[\Big(\sum_{s=1}^{n-1}a_{sn}^{\beta\alpha}\partial_s u_\beta\Big)
_{1\leq\alpha\leq M}\Big]
=\partial_{\rm tan}u,
\end{align}
as desired.
\end{proof}

We are now ready to state and prove a number of basic identities relating
the family of auxiliary kernels from Definition~\ref{kernelsQj-D} to the
Poisson kernel for the operator $L$, under the assumption that the latter
has a distinguished coefficient tensor.

\begin{proposition}\label{kernelsQj}
Let $L$ be an operator as in \eqref{L-def}-\eqref{L-ell.X} with the property that
${\mathfrak{A}}_L^{dis}\not=\emptyset$. Denote by $P$ the Poisson kernel for
$L$ from Theorem~\ref{ya-T4-fav} and fix some coefficient tensor
\begin{equation}\label{ytar-yR5}
A=\bigl(a_{rs}^{\alpha\beta}\bigr)
_{\substack{1\leq r,s\leq n\\ 1\leq\alpha,\beta\leq M}}\in{\mathfrak{A}}_L^{dis}.
\end{equation}
Then the auxiliary kernels $\big\{Q^{(j)}_{\alpha\beta}\big\}_{j,\alpha,\beta}$
introduced in Definition~\ref{kernelsQj-D} satisfy the following properties:
\begin{list}{$(\theenumi)$}{\usecounter{enumi}\leftmargin=.8cm
\labelwidth=.8cm\itemsep=0.2cm\topsep=.1cm
\renewcommand{\theenumi}{\alph{enumi}}}
\item for each $\alpha,\gamma\in\{1,\dots,M\}$ one has
for every $x'\in{\mathbb{R}}^{n-1}$ and every $t=x_n>0$
\begin{equation}\label{kernelQ-3}
2a^{\beta\alpha}_{rs}\Big(Q^{(r)}_{\gamma\beta}\Big)_t(x')=
\frac{x_s}{t}\big(P_{\gamma\alpha}\big)_t(x')\,\,\mbox{ for each }\,\,s\in\{1,\dots,n\};
\end{equation}
\item for every $\alpha,\gamma\in\{1,\dots,M\}$ one has
for every $x'\in{\mathbb{R}}^{n-1}$ and every $t>0$
\begin{equation}\label{kernelQ-3BB}
\partial_t\Big[\big(P_{\gamma\alpha}\big)_t(x')\Big]
=-2\sum\limits_{s=1}^{n-1}a^{\beta\alpha}_{rs}\partial_s
\Big[\big(Q^{(r)}_{\gamma\beta}\big)_t(x')\Big];
\end{equation}
\item for each $\gamma\in\{1,\dots,M\}$ one has
\begin{align}\label{kernelQ-4BB}
\big(Q^{(n)}_{\gamma\alpha}\big)_{1\leq\alpha\leq M}
=\,&\tfrac{1}{2}\big(A_{nn}^{\top}\big)^{-1}
\Big(\big(P_{\gamma\mu}\big)_{1\leq\mu\leq M}\Big)
\nonumber\\[4pt]
&-\sum_{s=1}^{n-1}\big(A_{nn}^{\top}\big)^{-1}\,
\Big(\big(a_{sn}^{\beta\mu}Q^{(s)}_{\gamma\beta}\big)_{1\leq\mu\leq M}\Big)
\,\,\mbox{ in }\,\,{\mathbb{R}}^{n-1}.
\end{align}
\end{list}
\end{proposition}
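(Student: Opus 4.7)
The plan is to derive all three identities from two inputs that are already available: \textit{(i)} since $A\in\mathfrak{A}_L^{dis}$, Proposition~\ref{HGfs59+E}(b) provides $a^{\beta\alpha}_{rs}(\partial_r E_{\gamma\beta})(x)=x_s\,k_{\gamma\alpha}(x)$ for every $x\in\mathbb{R}^n\setminus\{0\}$, and \textit{(ii)} Proposition~\ref{uniq:double->!Poisson} identifies the Poisson kernel as $P(x')=2k(x',1)$. Two preliminary scaling observations make these inputs usable: by the $(1-n)$-homogeneity of $\nabla E$ from Theorem~\ref{FS-prop}(c) one has $(Q^{(r)}_{\gamma\beta})_t(x')=(\partial_r E_{\gamma\beta})(x',t)$, as already recorded in \eqref{efgug-3}; and by the $(-n)$-homogeneity of $k$ noted in \eqref{kan-UHab-5}, combined with \eqref{Yagav-g45}, one obtains $(P_{\gamma\alpha})_t(x')=2t\,k_{\gamma\alpha}(x',t)$.

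For part (a), evaluate the identity in \textit{(i)} at $x=(x',t)$, adopting the convention $x_n:=t$ built into the statement. Substituting the two scaling relations above transforms it into $a^{\beta\alpha}_{rs}(Q^{(r)}_{\gamma\beta})_t(x')=\frac{x_s}{2t}(P_{\gamma\alpha})_t(x')$ for every $s\in\{1,\dots,n\}$, which is \eqref{kernelQ-3} up to the factor of $2$. For part (b), apply the general identity \eqref{ioaTga} componentwise to $P_{\gamma\alpha}$ to obtain $\partial_t[(P_{\gamma\alpha})_t(x')]=-\sum_{j=1}^{n-1}\partial_{x_j}\bigl[\tfrac{x_j}{t}(P_{\gamma\alpha})_t(x')\bigr]$, then invoke part (a) with $s=j$ to rewrite $\frac{x_j}{t}(P_{\gamma\alpha})_t(x')$ as $2a^{\beta\alpha}_{rj}(Q^{(r)}_{\gamma\beta})_t(x')$; since the coefficients are constants they commute with $\partial_{x_j}$, yielding \eqref{kernelQ-3BB} immediately.

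For part (c), specialize (a) to $s=n$ and $t=1$: the factor $x_s/t$ collapses to $1$, producing $2 a^{\beta\alpha}_{rn} Q^{(r)}_{\gamma\beta}(x')=P_{\gamma\alpha}(x')$. Splitting the $r$-sum into the $r=n$ contribution and the contribution from $r\in\{1,\dots,n-1\}$ rewrites this as $2\sum_{\beta}a^{\beta\alpha}_{nn}Q^{(n)}_{\gamma\beta}(x')=P_{\gamma\alpha}(x')-2\sum_{s=1}^{n-1}\sum_\beta a^{\beta\alpha}_{sn} Q^{(s)}_{\gamma\beta}(x')$. The left-hand side is the $\alpha$-th component of $A_{nn}^\top$ applied to the column vector $(Q^{(n)}_{\gamma\beta})_\beta$, and $A_{nn}^\top$ is invertible by the ellipticity remark recorded just after \eqref{eq2.13ttt.XX}. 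Multiplying both sides by $\tfrac{1}{2}(A_{nn}^\top)^{-1}$ then delivers \eqref{kernelQ-4BB}. The main obstacle is purely bookkeeping: one must keep straight that the contraction $\sum_\beta a^{\beta\alpha}_{nn}(\cdot)_\beta$ realizes $A_{nn}^\top$ and not $A_{nn}$, and recognize that the convention $x_n=t$ is precisely what allows part (a) to serve simultaneously as the tangential identity (for $s<n$, used in (b)) and as the normal identity (for $s=n$, used in (c)) within a single formula.
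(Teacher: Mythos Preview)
Your proof is correct and follows essentially the same approach as the paper. Parts (a) and (b) match the paper's argument almost verbatim: part (a) combines \eqref{yfg-yLLL} with \eqref{Yagav-g45} via homogeneity (you scale to $(x',t)$ while the paper scales to $(x'/t,1)$, which is cosmetically different but equivalent), and part (b) is identical, plugging (a) into \eqref{ioaTga}.

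For part (c) your route is slightly more direct than the paper's. The paper invokes Lemma~\ref{decomp-pt}, decomposing $\partial_t=\partial_{\rm tan}+\big(A_{nn}^\top\big)^{-1}D_{A^{\!\top}}$, applies it to $E_{\gamma\cdot}$, and then simplifies the $D_{A^{\!\top}}$ contribution using \eqref{kernelQ-3} with $s=n$. You instead specialize \eqref{kernelQ-3} to $s=n$, $t=1$ at the outset, split the $r$-sum, and invert $A_{nn}^\top$. The underlying algebra is the same---both arguments hinge on \eqref{kernelQ-3} with $s=n$ and the invertibility of $A_{nn}^\top$---but your presentation bypasses the auxiliary operator decomposition in Lemma~\ref{decomp-pt}, which is a mild economy.
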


\begin{proof}
Since ${\mathfrak{A}}_L^{dis}\not=\emptyset$, Proposition~\ref{HGfs59+E}
ensures that the Poisson kernel $P$ satisfies \eqref{Yagav-g45}.
Hence, if $E$ is the fundamental solution for $L$ from Theorem~\ref{FS-prop},
starting with \eqref{kernelQ-1}, then using \eqref{yfg-yLLL}, and then
\eqref{Yagav-g45}, for each $s\in\{1,\dots,n\}$, $\alpha,\gamma\in\{1,\dots,M\}$,
for every $x'\in{\mathbb{R}}^{n-1}$ and $t=x_n>0$ we obtain
\begin{align}\label{efgug}
2a^{\beta\alpha}_{rs}\Big(Q^{(r)}_{\gamma\beta}\Big)_t(x')
=&\, 2a^{\beta\alpha}_{rs}t^{1-n}(\partial_r E_{\gamma\beta})(x'/t,1)
\nonumber\\[4pt]
=&\,2t^{1-n}(x'/t,1)_s \,k_{\gamma\alpha}(x'/t,1)
\nonumber\\[4pt]
= &\,(x'/t,1)_s\,\big(P_{\gamma\alpha}\big)_t(x')
=\frac{x_s}{t}\big(P_{\gamma\alpha}\big)_t(x').
\end{align}
This takes care of \eqref{kernelQ-3}. The statement in $(b)$ is obtained from
\eqref{ioaTga} and \eqref{kernelQ-3} by writing for every $x'\in{\mathbb{R}}^{n-1}$
and $t>0$
\begin{equation}\label{efgug-2}
\partial_t\Big[\big(P_{\gamma\alpha}\big)_t(x')\Big]
=-\sum_{s=1}^{n-1}\partial_s\Big[\frac{x_s}{t}\big(P_{\gamma\alpha}\big)_t(x')\Big]
=-2\sum\limits_{s=1}^{n-1}a^{\beta\alpha}_{rs}\partial_s
\Big[\big(Q^{(r)}_{\gamma\beta}\big)_t(x')\Big].
\end{equation}
The next task is to prove \eqref{kernelQ-4BB}.
Recalling \eqref{kernelQ-1}, the term in the left hand-side of \eqref{kernelQ-4BB}
evaluated at an arbitrary point $x'\in{\mathbb{R}}^{n-1}$ becomes
\begin{align}\label{efgug-6}
Q^{(n)}_{\gamma\cdot} (x')
&\,= (\partial_t E_{\gamma\cdot})(x',1)
=\big[\partial_t E_{\gamma\cdot}(x',t)\big]\Big|_{t=1}
\nonumber\\[4pt]
&\,=-\sum_{s=1}^{n-1}\big(A_{nn}^{\top}\big)^{-1}\,
\Big[a_{sn}^{\beta\cdot} (\partial_s E_{\gamma\beta})(x',1)\Big]
+\big(A_{nn}^{\top}\big)^{-1}
\big[D_{A^{\!\top}}E_{\gamma\cdot}(x',t)\big]\Big|_{t=1}
\nonumber\\[4pt]
&\,=-\sum_{s=1}^{n-1}\big(A_{nn}^{\top}\big)^{-1}\,
\Big[a_{sn}^{\beta\cdot}Q^{(s)}_{\gamma\beta}(x')\Big]
+\big(A_{nn}^{\top}\big)^{-1}
\Big[a^{\beta\cdot}_{jn} Q^{(j)}_{\gamma\beta}(x')\Big]
\nonumber\\[4pt]
&\,=-\sum_{s=1}^{n-1}\big(A_{nn}^{\top}\big)^{-1}\,
\Big[a_{sn}^{\beta\cdot}Q^{(s)}_{\gamma\beta}(x')\Big]
+\tfrac{1}{2}\big(A_{nn}^{\top}\big)^{-1}\big[P_{\gamma\cdot}(x')\big].
\end{align}
The third equality in \eqref{efgug-6} uses the decomposition of $\partial_t$
as in Lemma \ref{decomp-pt} and \eqref{Tang-partT-2}, the forth equality is based
on \eqref{kernelQ-1} and \eqref{Tang-partT-2}, while the last equality is a consequence
of \eqref{kernelQ-3} specialized to the case when $s=n$.
\end{proof}

It is useful to rephrase the kernel identities from Proposition~\ref{kernelsQj}
in terms of their associated convolution operators. Before doing so, the reader
is advised to recall the piece of notation introduced in \eqref{eq2.13ttt.XX}.

\begin{proposition}\label{kernelsQj-3}
Let $L$ be an operator as in \eqref{L-def}-\eqref{L-ell.X} with the property that
${\mathfrak{A}}_L^{dis}\not=\emptyset$. Denote by $P$ the Poisson kernel for
$L$ from Theorem~\ref{ya-T4-fav}, and fix some coefficient tensor
\begin{equation}\label{ytar-yR5-XX}
A=\bigl(a_{rs}^{\alpha\beta}\bigr)
_{\substack{1\leq r,s\leq n\\ 1\leq\alpha,\beta\leq M}}\in{\mathfrak{A}}_L^{dis}.
\end{equation}
Consider the family of auxiliary kernels
$\big\{Q^{(j)}_{\alpha\beta}\big\}_{j,\alpha,\beta}$
introduced in Definition~\ref{kernelsQj-D} and let $p\in(1,\infty)$.
Then, for every $t>0$, the following identities hold:
\begin{list}{$(\theenumi)$}{\usecounter{enumi}\leftmargin=.8cm
\labelwidth=.8cm\itemsep=0.2cm\topsep=.1cm
\renewcommand{\theenumi}{\alph{enumi}}}
\item for every $f=(f_\alpha)_\alpha\in L^p({\mathbb{R}}^{n-1})$ one has
\begin{equation}\label{kernelQ-4BB-Tg.3}
Q^{(n)}_t\ast f=\tfrac{1}{2}\,P_t\ast A_{nn}^{-1}f
-\sum_{s=1}^{n-1} Q^{(s)}_t\ast A_{sn}A_{nn}^{-1}f
\qquad\mbox{in }\,\,{\mathbb{R}}^{n-1};
\end{equation}
\item if $f=(f_\alpha)_\alpha\in L^p_1({\mathbb{R}}^{n-1})$, then for
each $\gamma\in\{1,\dots,M\}$,
\begin{equation}\label{kernelQ-5}
\partial_t\Big[(P_t\ast f)_\gamma\Big]
=-2\sum_{s=1}^{n-1}a^{\beta\alpha}_{rs}\Big(\big(Q^{(r)}_{\gamma\beta}\big)_t\ast
\partial_s f_\alpha\Big)\qquad\mbox{in }\,\,{\mathbb{R}}^{n-1},
\end{equation}
and for every $r\in\{1,\dots,n-1\}$,
\begin{equation}\label{kernelQ-5.XZ}
\partial_t\Big[\big(Q^{(r)}_t\ast f\big)_\gamma\Big]
=\big(Q^{(n)}_t\ast (\partial_r f)\big)_\gamma
\qquad\mbox{in }\,\,{\mathbb{R}}^{n-1}.
\end{equation}
\end{list}
\end{proposition}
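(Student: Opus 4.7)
The plan is to derive each of (a)--(c) from a corresponding pointwise kernel identity already established in Proposition~\ref{kernelsQj} or Lemma~\ref{RTHb}, by convolving that identity with the vector-valued function $f$ using the matrix--vector convention $(K\ast f)_\gamma:=\sum_\alpha K_{\gamma\alpha}\ast f_\alpha$. The only non-routine step is, in parts (b) and (c), moving a tangential derivative from an auxiliary kernel onto $f$ via integration by parts, which requires some care because the kernels $(Q^{(r)}_{\gamma\beta})_t$ decay only as $|x'|^{1-n}$ at infinity (cf.~\eqref{kernelQ-2}).

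For (a), I would convolve the vector identity \eqref{kernelQ-4BB} with $f$, taking the $\alpha$-th component and summing against $f_\alpha$. The two elementary identities
\begin{equation*}
\sum_\alpha\bigl((A_{nn}^\top)^{-1}\bigr)_{\alpha\mu}f_\alpha=(A_{nn}^{-1}f)_\mu
\quad\text{and}\quad
\sum_\mu a_{sn}^{\beta\mu}(A_{nn}^{-1}f)_\mu=(A_{sn}A_{nn}^{-1}f)_\beta
\end{equation*}
then reassemble the right-hand side into exactly \eqref{kernelQ-4BB-Tg.3}, so (a) amounts to pure matrix bookkeeping on top of the pointwise identity. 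For (b), I would convolve \eqref{kernelQ-3BB} with $f_\alpha$ and sum over $\alpha$: on the left, $\partial_t$ commutes with convolution in $x'$ and produces $\partial_t[(P_t\ast f)_\gamma]$; on the right, each factor $\partial_s[(Q^{(r)}_{\gamma\beta})_t(\,\cdot\,)]$ with $s\in\{1,\dots,n-1\}$ is shifted onto $f_\alpha$ by integration by parts, yielding \eqref{kernelQ-5}. For (c), I would specialize \eqref{kernelQ-4} to $j=n$ (so that $\partial_j=\partial_t$), which gives
\begin{equation*}
\partial_t\bigl[(Q^{(r)}_{\gamma\alpha})_t(x')\bigr]=\partial_r\bigl[(Q^{(n)}_{\gamma\alpha})_t(x')\bigr]
\quad\text{for }\,r\in\{1,\dots,n-1\},
\end{equation*}
then convolve with $f_\alpha$, sum over $\alpha$, and transfer the tangential $\partial_r$ from the kernel onto $f$ to obtain \eqref{kernelQ-5.XZ}.

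The main technical point to address is the integration-by-parts step just used in parts (b) and (c). Since $(Q^{(r)}_{\gamma\beta})_t$ is not integrable on $\mathbb{R}^{n-1}$, a direct divergence-theorem argument is not automatic. I would handle this by a density argument: approximate $f\in L^p_1(\mathbb{R}^{n-1})$ in norm by a sequence $\{f_k\}\subset\mathscr{C}^\infty_c(\mathbb{R}^{n-1})$, carry out the manipulations for each $f_k$ (where the integrations by parts are elementary, with all boundary terms vanishing by compact support), and then pass to the limit using the $L^p$-continuity of the convolution operators $g\mapsto(Q^{(r)}_{\gamma\beta})_t\ast g$ afforded by Lemma~\ref{RTHb}(c), together with the obvious $L^p$-continuity of $g\mapsto P_t\ast g$. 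This single density argument covers all the justifications required, and the resulting $L^p$-level identities then hold pointwise by continuity of both sides in $x'$.
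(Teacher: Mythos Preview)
Your proposal is correct and follows essentially the same route as the paper: part~(a) is obtained by convolving \eqref{kernelQ-4BB} with $f$ and doing the matrix bookkeeping exactly as you indicate, part~(b) by convolving \eqref{kernelQ-3BB} with $f$ and shifting the tangential derivative onto $f$, and part~(c) by specializing \eqref{kernelQ-4} to $j=n$ and again transferring the tangential derivative. The paper carries out these computations without commenting on the integration-by-parts step, so your density argument (which is valid, since for each fixed $t>0$ the kernels $\bigl(Q^{(r)}_{\gamma\beta}\bigr)_t$ lie in $L^{p'}(\mathbb{R}^{n-1})$ and Lemma~\ref{RTHb}(c) controls the nontangential maximal function) is a welcome bit of extra care rather than a departure in method.
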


\begin{proof}
Fix $f=(f_\alpha)_\alpha\in L^p({\mathbb{R}}^{n-1})$ and $\gamma\in\{1,...,M\}$.
To obtain \eqref{kernelQ-4BB-Tg.3},  we convolve \eqref{kernelQ-4BB} with $f$
in order to write
\begin{align}\label{kernelQ-4BB-Tg.1}
\Big(Q^{(n)}_t\ast f\Big)_\gamma
&=\big(Q^{(n)}_{\gamma\alpha}\big)_t\ast f_\alpha
\nonumber\\[4pt]
&=\tfrac{1}{2}\Big(\big(A_{nn}^{\top}\big)^{-1}\Big)_{\alpha\mu}
\big(P_{\gamma\mu}\big)_t\ast f_\alpha
-\sum_{s=1}^{n-1}\Big(\big(A_{nn}^{\top}\big)^{-1}\Big)_{\alpha\mu}
a_{sn}^{\beta\mu}\big(Q^{(s)}_{\gamma\beta}\big)_t\ast f_\alpha
\nonumber\\[4pt]
&=\tfrac{1}{2}\big(P_{\gamma\mu}\big)_t\ast\big(A_{nn}^{-1}f\big)_\mu
-\sum_{s=1}^{n-1}a_{sn}^{\beta\mu}\big(Q^{(s)}_{\gamma\beta}\big)_t
\ast\big(A_{nn}^{-1}f\big)_\mu
\nonumber\\[4pt]
&=\tfrac{1}{2}\big(P_t\ast A_{nn}^{-1}f\big)_\gamma
-\sum_{s=1}^{n-1}\big(Q^{(s)}_t\ast A_{sn}A_{nn}^{-1}f\big)_\gamma
\,\,\mbox{ in }\,\,\mathbb{R}^{n-1}.
\end{align}

Moving on, suppose that actually $f\in L^p_1({\mathbb{R}}^{n-1})$ and let
$x'\in{\mathbb{R}}^{n-1}$ be arbitrary. Then we have
\begin{align}\label{efgug-5}
\partial_t\Big[(P_t\ast f)_\gamma(x')\Big]
&=\int_{{\mathbb{R}}^{n-1}}\partial_t\Big[(P_{\gamma\mu})_t(x'-y')\Big]f_\mu(y')\,dy'
\\
&= -2\sum\limits_{s=1}^{n-1}a^{\beta\mu}_{rs}
\int_{{\mathbb{R}}^{n-1}}\partial_{x_s}\Big[\big(Q^{(r)}_{\gamma\beta}\big)_t(x'-y')\Big]
f_\mu(y')\,dy'
\nonumber\\
&=-2\sum_{s=1}^{n-1}a^{\beta\mu}_{rs}\Big(\big(Q^{(r)}_{\gamma\beta}\big)_t\ast
\partial_s f_\mu\Big)(x'),
\nonumber
\end{align}
where in the second equality in \eqref{efgug-5} we have employed \eqref{kernelQ-3BB}.
This proves \eqref{kernelQ-5}. We are left with justifying \eqref{kernelQ-5.XZ}.
If $r\in\{1,\dots,n-1\}$, then making use of \eqref{kernelQ-4} with $j=n$ allows
us to write
\begin{align}\label{kernelQ-4CC}
\partial_t\Big[\big(Q^{(r)}_t\ast f\big)_\gamma\Big]
&= \partial_t\Big[\big(Q^{(r)}_{\gamma\alpha}\big)_t\ast f_\alpha\Big]
=\partial_r\Big[\big(Q^{(n)}_{\gamma\alpha}\big)_t\ast f_\alpha\Big]
\\[4pt]
&=\big(Q^{(n)}_{\gamma\alpha}\big)_t\ast (\partial_r f_\alpha)
=\big(Q^{(n)}_t\ast (\partial_r f)\big)_\gamma\quad\mbox{in }\,\,{\mathbb{R}}^{n-1}.
\end{align}
The proof of the proposition is therefore finished.
\end{proof}

The following convention is designed to facilitate the remaining portion of the
exposition in this section.

\begin{convention}\label{CC-1}
Given two vectors $f$ and $g$, we will use the notation $f\equiv g$ to indicate
that each component of $f$ may be written as a finite linear combination of the
components of $g$. Also, given a coefficient tensor $A=(a^{\alpha\beta}_{jk})_{\alpha,\beta,j,k}$, the notation $M_A f$ is used
to indicate that some (or all) of the components of the vector $f$ are multiplied
with entries from $A$, or from $(A_{nn})^{-1}$. By $\partial_\tau$ we denote any
of the derivatives $\partial_1,...,\partial_{n-1}$, and write $\partial_\tau^k$
for its $k$-fold iteration. Finally, concerning the kernels from \eqref{kernelQ-1},
we agree that $Q^{\rm I}$ denotes any $M\times M$ matrix with entries of the form
$Q^{(s)}_{\alpha\beta}$ where $s\in\{1,\dots,n-1\}$
and $\alpha,\beta\in\{1,\dots,M\}$. On the other hand,
$Q^{\rm II}$ denotes any $M\times M$ matrix with entries of the form $Q^{(n)}_{\alpha\beta}$ where $\alpha,\beta\in\{1,\dots,M\}$.
\end{convention}

Convention~\ref{CC-1} may now be used to succinctly summarize the
identities in Proposition~\ref{kernelsQj-3}, as follows.

\begin{proposition}\label{kernelsQj-Succ}
Retain the hypotheses from Proposition~\ref{kernelsQj-3}. Then
the properties listed below (formulated using Convention~\ref{CC-1})
are true for every $t>0$.
\begin{list}{$(\theenumi)$}{\usecounter{enumi}\leftmargin=.8cm
\labelwidth=.8cm\itemsep=0.2cm\topsep=.1cm
\renewcommand{\theenumi}{\alph{enumi}}}
\item If $f\in L^p({\mathbb{R}}^{n-1})$, then
\begin{equation}\label{kernelQ-4BB-Tg.3Y}
Q^{\rm II}_t\ast f\equiv P_t\ast M_Af+Q^{\rm I}_t\ast M_Af
\qquad\mbox{in }\,\,{\mathbb{R}}^{n-1}.
\end{equation}
\item If $f\in L^p_1({\mathbb{R}}^{n-1})$, then pointwise in
${\mathbb{R}}^{n-1}$ one has
\begin{align}\label{kernelQ-5Ya}
\partial_t\big[Q^{\rm I}_t\ast f\big]
&\equiv Q^{\rm II}_t\ast \partial_\tau f
\equiv P_t\ast(M_A\partial_\tau f)+Q^{\rm I}_t\ast(M_A\partial_\tau f)
\\[6pt]
\label{kernelQ-5Yc}\partial_t\big[P_t\ast f\big]
& \equiv M_A Q^{\rm I}_t\ast\partial_\tau f+M_A Q^{\rm II}_t\ast\partial_\tau f
\\[6pt]
&\equiv M_A Q^{\rm I}_t\ast(M_A\partial_\tau f)+M_A P_t\ast(M_A\partial_\tau f).
\nonumber
\end{align}
\end{list}
\end{proposition}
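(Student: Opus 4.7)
The plan is to verify that each of the three assertions amounts to a direct transcription of the corresponding identity from Proposition~\ref{kernelsQj-3} into the compact shorthand set up in Convention~\ref{CC-1}. Since the multipliers hidden in the symbol $M_A$ are constant complex numbers (entries of $A$ or of $(A_{nn})^{-1}$), they commute with convolution, and the equivalence relation $\equiv$ is explicitly designed to ignore constant linear combinations. So no analysis is needed beyond bookkeeping; the main thing to verify is that in each step the indices $s,r$ land in the ranges demanded by the decoration of $Q^{\rm I}$ and $Q^{\rm II}$.

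For part (a), I would start with identity \eqref{kernelQ-4BB-Tg.3} of Proposition~\ref{kernelsQj-3}. Since the matrix-valued kernel $Q^{(n)}_t$ is, by definition in Convention~\ref{CC-1}, an instance of $Q^{\rm II}_t$, while each $Q^{(s)}_t$ with $s\in\{1,\dots,n-1\}$ is an instance of $Q^{\rm I}_t$, and since both $A_{nn}^{-1}f$ and $A_{sn}A_{nn}^{-1}f$ are of the form $M_A f$, the stated equivalence \eqref{kernelQ-4BB-Tg.3Y} is immediate.

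For part (b), I would first handle the chain involving $\partial_t[Q^{\rm I}_t\ast f]$. The key input is \eqref{kernelQ-5.XZ}, valid precisely for $r\in\{1,\dots,n-1\}$, which reads $\partial_t[(Q^{(r)}_t\ast f)_\gamma]=(Q^{(n)}_t\ast(\partial_r f))_\gamma$. In the shorthand this is exactly $\partial_t[Q^{\rm I}_t\ast f]\equiv Q^{\rm II}_t\ast \partial_\tau f$, and substituting part (a) (already proven, with $f$ replaced by $\partial_\tau f\in L^p$) gives the further equivalence $\equiv P_t\ast(M_A\partial_\tau f)+Q^{\rm I}_t\ast(M_A\partial_\tau f)$. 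Next, for $\partial_t[P_t\ast f]$, I would invoke identity \eqref{kernelQ-5}, split the sum there according to whether $r\in\{1,\dots,n-1\}$ (contributing kernels of type $Q^{\rm I}$) or $r=n$ (contributing kernels of type $Q^{\rm II}$); the coefficient $a^{\beta\alpha}_{rs}$ is absorbed into $M_A$, and $\partial_s f$ with $s\in\{1,\dots,n-1\}$ becomes $\partial_\tau f$. This yields \eqref{kernelQ-5Yc} in its first form. The second form then follows by feeding the $M_A Q^{\rm II}_t\ast\partial_\tau f$ term through part (a), noting that $M_A Q^{\rm I}_t\ast \partial_\tau f$ may be absorbed into $M_A Q^{\rm I}_t\ast(M_A\partial_\tau f)$ by taking the identity as one possible multiplier in $M_A$.

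I do not anticipate a substantive obstacle: the entire proposition is a repackaging of already established identities. The only thing requiring care is to make sure that the ranges of summation are honored, so that $Q^{(s)}$ with $s\le n-1$ is always recognized as $Q^{\rm I}$ and $Q^{(n)}$ as $Q^{\rm II}$, and that the constant matrix factors are pulled inside or outside the convolution without disturbing the equivalence $\equiv$. Once this is done, the proof is a few lines.
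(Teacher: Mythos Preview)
Your proposal is correct and follows essentially the same approach as the paper: the paper's proof simply notes that \eqref{kernelQ-4BB-Tg.3Y} is a condensed version of \eqref{kernelQ-4BB-Tg.3}, that \eqref{kernelQ-5Ya} comes from \eqref{kernelQ-5.XZ} followed by \eqref{kernelQ-4BB-Tg.3Y}, and that \eqref{kernelQ-5Yc} abbreviates \eqref{kernelQ-5} (splitting the implicit sum over $r$ into the cases $r\le n-1$ and $r=n$) with a further application of \eqref{kernelQ-4BB-Tg.3Y}. Your bookkeeping of the index ranges and the handling of $M_A$ is exactly what is needed.
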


\begin{proof}
Identity \eqref{kernelQ-4BB-Tg.3Y} is a condensed version of \eqref{kernelQ-4BB-Tg.3}.
The first part in \eqref{kernelQ-5Ya} is a rewriting of \eqref{kernelQ-5.XZ},
while the second part is a consequence of \eqref{kernelQ-4BB-Tg.3Y}.
The first part in \eqref{kernelQ-5Yc} abbreviates \eqref{kernelQ-5},
while the last part follows from the first part and \eqref{kernelQ-4BB-Tg.3Y}.
\end{proof}

We are now in a position to formulate our main identities pertaining to higher
order derivatives of the operator of convolution with the Poisson kernel
under the assumption that the differential operator $L$ has a distinguished
coefficient tensor.

\begin{proposition}\label{KfhI+G}
Let $L$ be an operator as in \eqref{L-def}-\eqref{L-ell.X} with the property that
${\mathfrak{A}}_L^{dis}\not=\emptyset$. Fix some coefficient tensor
$A\in{\mathfrak{A}}_L^{dis}$ and denote by $P$ the Poisson kernel for
$L$ from Theorem~\ref{ya-T4-fav}. Also, let $p\in(1,\infty)$, $k\in{\mathbb{N}}_0$,
and for some $f\in L^p_k({\mathbb{R}}^{n-1})$ define the function
\begin{equation}\label{VfX-2}
u(x',t):=(P_t\ast f)(x'),\qquad\forall\,(x',t)\in{\mathbb{R}}^n_{+}.
\end{equation}
Then, for every  $(x',t)\in{\mathbb{R}}^n_{+}$ the following identity (formulated using Convention~\ref{CC-1})
holds:
\begin{equation}\label{VfX-2:a}
\nabla^k u(x',t)\equiv M_A \big(P_t\ast(M_A\partial_\tau^k f)\big)(x')
+M_A\big(Q^{\rm I}_t\ast (M_A\partial_\tau^k f)\big)(x').
\end{equation}
\end{proposition}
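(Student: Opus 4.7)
I would prove Proposition~\ref{KfhI+G} by induction on $k \in \mathbb{N}_0$, relying heavily on Convention~\ref{CC-1} to keep the bookkeeping light. The base case $k=0$ is immediate from the definition \eqref{VfX-2}, since $u = P_t \ast f$ already has the form on the right-hand side of \eqref{VfX-2:a} (taking the $Q^{\rm I}$ contribution to be zero and the $M_A$'s to be identities).

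For the inductive step, assume \eqref{VfX-2:a} holds for some $k \geq 0$ with $f \in L^p_k(\mathbb{R}^{n-1})$, and consider $f \in L^p_{k+1}(\mathbb{R}^{n-1})$. It suffices to check that $\partial_j (\nabla^k u)$ has the required form for each $j \in \{1, \dots, n\}$. There are two cases. If $j \in \{1, \dots, n-1\}$, so $\partial_j = \partial_\tau$ is tangential, then for any smooth kernel $K$ with enough decay one has $\partial_\tau (K_t \ast g) = K_t \ast (\partial_\tau g)$ pointwise; applied with $K \in \{P, Q^{\rm I}\}$ and $g = M_A \partial_\tau^k f$, the extra $\partial_\tau$ passes onto $f$, yielding the form \eqref{VfX-2:a} with $\partial_\tau^{k+1} f$ in place of $\partial_\tau^k f$. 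If instead $j = n$, so $\partial_j = \partial_t$, I would invoke the key identities \eqref{kernelQ-5Ya}--\eqref{kernelQ-5Yc} from Proposition~\ref{kernelsQj-Succ}, which precisely encode how a $\partial_t$ acting on $P_t \ast g$ or on $Q^{\rm I}_t \ast g$ converts into a convolution of $P_t$ and $Q^{\rm I}_t$ against a tangential derivative $\partial_\tau g$ (modulo coefficient multipliers $M_A$). Applying these identities to each of the two summands in the inductive hypothesis produces exactly the desired expression in which all $k+1$ derivatives have been transferred onto $f$ as tangential derivatives.

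The main subtlety lies in confirming that the normal-derivative case closes under induction, i.e.\ that applying $\partial_t$ never introduces a kernel outside the class $\{P, Q^{\rm I}\}$; this is guaranteed precisely because \eqref{kernelQ-5Ya} produces a $Q^{\rm II}_t \ast \partial_\tau g$ term which, via \eqref{kernelQ-4BB-Tg.3Y}, is immediately absorbed back into combinations of $P_t$ and $Q^{\rm I}_t$ convolutions. Thus the family $\{P, Q^{\rm I}\}$ is stable under both tangential differentiation (which only acts on $f$) and the ``$\partial_t \mapsto \partial_\tau$ on $f$'' transmutation. Regularity is not an issue since $f \in L^p_{k+1}$ implies $\partial_\tau^j f \in L^p_{k+1-j}$ at each intermediate stage, so all convolutions and differentiations are meaningful. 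Once \eqref{VfX-2:a} is established, the norm bound \eqref{Dir-BVP-p2F.ii:l} then follows by combining the $L^p$-boundedness estimate \eqref{smetg} for $P_t$ with the analogous bound \eqref{GTdS-2} for the kernels $Q^{(j)}_{\alpha\beta}$ (applicable for $Q^{\rm I}$), completing the proof of Theorem~\ref{them:Dir-l}.
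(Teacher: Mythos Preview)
Your proposal is correct and follows essentially the same approach as the paper's own proof: induction on $k$, passing tangential derivatives through the convolution onto $f$ via \eqref{VfX-2BBB}, and handling the normal derivative $\partial_t$ via the transmutation identities \eqref{kernelQ-5Ya}--\eqref{kernelQ-5Yc} of Proposition~\ref{kernelsQj-Succ}. The paper's proof is terser but the logical structure is identical; your remark about $Q^{\rm II}$ being reabsorbed via \eqref{kernelQ-4BB-Tg.3Y} makes explicit exactly what the second $\equiv$ in \eqref{kernelQ-5Ya} and \eqref{kernelQ-5Yc} already encodes.
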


\begin{proof}
Identity \eqref{VfX-2:a} follows by induction on $k$ from identities
\eqref{kernelQ-5Ya}, \eqref{kernelQ-5Yc} and the fact that for each
$\ell\in{\mathbb{N}}$ and each $t>0$, we have
\begin{equation}\label{VfX-2BBB}
\partial_\tau^\ell\big(P_t\ast g\big)=P_t\ast \partial_\tau^\ell g
\quad\mbox{ and }\quad
\partial_\tau^\ell\big(Q^{\rm I}_t\ast g\big)=Q^{\rm I}_t\ast \partial_\tau^\ell g
\quad\mbox{in }\,\,{\mathbb{R}}^{n-1},
\end{equation}
for every $g\in L^p_\ell({\mathbb{R}}^{n-1})$.
\end{proof}

All the ingredients are now in place to proceed with the proof our main result.

\begin{proof}[Proof of Theorem \ref{them:Dir-l}]
Fix $p\in(1,\infty)$, $\ell\in{\mathbb{N}}_0$, and $f\in L^p_\ell(\mathbb{R}^{n-1})$.
The fact that $u$ defined as in \eqref{eqn-Dir-l:u} satisfies the first and
last conditions in \eqref{Dir-BVP-pLL:l} is a consequence of \eqref{uahgab-UBVCX}
and \eqref{smetg}. In addition, uniqueness for \eqref{Dir-BVP-pLL:l} is a
consequence of Theorem~\ref{YF-TY.6yh}. Finally, from \eqref{VfX-2:a}, \eqref{GTdS-2},
and the estimate in \eqref{smetg}, we deduce that the function \eqref{eqn-Dir-l:u}
also satisfies \eqref{Dir-BVP-p2F.ii:l}.
\end{proof}

\section{Examples of boundary problems of mathematical physics}
\setcounter{equation}{0}
\label{S-5}

In this section we present some examples involving differential operators
of basic importance in mathematical physics. For a more detailed discussion
(as well as a broader perspective) in this regard, the interested reader
is referred to \cite{MaMiMiMi}.

\subsection{Scalar second order elliptic equations}\label{SSe-5.1}
Assume that the $n\times n$ matrix $A=(a_{rs})_{r,s}\in{\mathbb{C}}^{n\times n}$
with complex entries satisfies the ellipticity condition
\begin{equation}\label{YUjhv-753}
\inf_{\xi\in S^{n-1}}{\rm Re}\,\bigl[a_{rs}\xi_r\xi_s\bigr]>0,
\end{equation}
and consider the elliptic differential operator $L={\rm div}A\nabla$
in ${\mathbb{R}}^n_{+}$. From Remark~\ref{Tavav-yag8} we know that
${\mathfrak{A}}_L^{dis}\not=\emptyset$ and, in fact, $A_{\rm sym}\in {\mathfrak{A}}_L^{dis}$. Keeping this in mind, Proposition~\ref{uniq:double->!Poisson},
\eqref{YTcxb-ytSH}, and \eqref{yfg-yLLL}, eventually give that
\begin{equation}\label{Uahab8a-hab}
P(x'):=\frac{2}{\omega_{n-1}\sqrt{{\rm det}\,(A_{\rm sym})}}
\frac{1}{\big\langle(A_{\rm sym})^{-1}(x',1),(x',1)\big\rangle^{\frac{n}{2}}},
\qquad\forall\,x'\in{\mathbb{R}}^{n-1},
\end{equation}
is the (unique, by Proposition~\ref{uniqueness-Poisson}) Poisson kernel
for the operator $L={\rm div}A\nabla$ in ${\mathbb{R}}^n_{+}$. It is reassuring to
observe that \eqref{Uahab8a-hab} reduces precisely to \eqref{Uah-TTT} in the case
when $A=I$ (i.e., when $L$ is the Laplacian).

Going further, by invoking Theorem~\ref{them:Dir-l} we obtain that
for each $\ell\in\mathbb{N}_0$
the $\ell$-th order Dirichlet boundary value problem \eqref{Dir-BVP-pLL:l} is
well-posed when $L={\rm div}A\nabla$. Moreover, the solution $u$ satisfies
\eqref{Dir-BVP-p2F.ii:l}, and is given at each point
$(x',t)\in{\mathbb{R}}^n_{+}$ by the formula
\begin{equation}\label{kanb-uTn}
u(x',t)=\frac{2t}{\omega_{n-1}\sqrt{{\rm det}\,(A_{\rm sym})}}
\int_{{\mathbb{R}}^{n-1}}\frac{f(y')}{\big\langle(A_{\rm sym})^{-1}(x'-y',t),(x'-y',t)
\big\rangle^{\frac{n}{2}}}\,dy'.
\end{equation}

\subsection{The case of the Lam\'e system of elasticity}\label{SSe-5.2}
Recall that the Lam\'e operator in ${\mathbb{R}}^n$ has the form
\begin{equation}\label{TYd-YG-76g}
Lu:=\mu\Delta u+(\lambda+\mu)\nabla{\rm div}\,u,\qquad u=(u_1,...,u_n)\in{\mathscr{C}}^2,
\end{equation}
where the constants $\lambda,\mu\in{\mathbb{R}}$ (typically called Lam\'e moduli),
are assumed to satisfy
\begin{equation}\label{Yfhv-8yg}
\mu>0\,\,\mbox{ and }\,\,2\mu+\lambda>0.
\end{equation}
Condition \eqref{Yfhv-8yg} is equivalent to the demand that the Lam\'e system \eqref{TYd-YG-76g} satisfies the Legendre-Hadamard ellipticity condition \eqref{L-ell.X}.
To illustrate the manner in which the Lam\'e system \eqref{TYd-YG-76g}
may be written in infinitely many ways as in \eqref{L-def}, for each
$\theta\in{\mathbb{R}}$ introduce
\begin{equation}\label{Lame-1}
a_{rs}^{\alpha\beta}(\theta):=\mu\,\delta_{rs}\delta_{\alpha\beta}
+(\lambda+\mu-\theta)\,\delta_{r\alpha}\delta_{s\beta}
+\theta\,\delta_{r\beta}\delta_{s\alpha},\qquad 1\leq\alpha,\beta,r,s\leq n.
\end{equation}
Then for each $\theta\in\mathbb{R}$ one can show that the Lam\'e
operator \eqref{TYd-YG-76g} may be regarded as having the form \eqref{L-def}
for the coefficient tensor $A=A(\theta):=\bigl(a_{rs}^{\alpha\beta}(\theta)\bigr)
_{\substack{1\leq r,s\leq n\\ 1\leq\alpha,\beta\leq n}}$ with entries
as in \eqref{Lame-1}. In short, $A(\theta)\in{\mathfrak{A}}_L$ for each
$\theta\in{\mathbb{R}}$.

Regarding the existence of a value for the parameter $\theta\in{\mathbb{R}}$ which makes
$A(\theta)$ a distinguished coefficient tensor for the Lam\'e system, we note the
following result.

\begin{lemma}\cite{MaMiMi},\cite{MaMiMiMi}\label{gEJ}
One has $A(\theta)\in{\mathfrak{A}}_L^{dis}$ if and only if
$\theta=\frac{\mu(\lambda+\mu)}{3\mu+\lambda}$. Moreover, corresponding to
this value of $\theta$, the entries in $A(\theta)$ become
for $\alpha,\beta,r,s\in\{1,\dots,n\}$
\begin{equation}\label{BUIg-17XX}
a^{\alpha\beta}_{rs}
=\mu\delta_{rs}\delta_{\alpha\beta}
+\frac{(\lambda+\mu)(2\mu+\lambda)}{3\mu+\lambda}\delta_{r\alpha}\delta_{s\beta}
+\frac{\mu(\lambda+\mu)}{3\mu+\lambda}\delta_{r\beta}\delta_{s\alpha}.
\end{equation}
\end{lemma}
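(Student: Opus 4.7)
The plan is to exploit the equivalent characterization of distinguished coefficient tensors provided by Proposition~\ref{HGfs59+E}(b): namely, $A(\theta)\in\mathfrak{A}_L^{dis}$ if and only if the pointwise identity
\[
a^{\beta\alpha}_{rs}(\theta)\,(\partial_r E_{\gamma\beta})(x) = x_s\,k_{\gamma\alpha}(x)\quad\text{on }\mathbb{R}^n\setminus\{0\}
\]
holds for some matrix-valued function $k$, where $E$ is the fundamental solution of the Lam\'e system from Theorem~\ref{FS-prop}. This route is convenient because the Lam\'e fundamental solution has a rigid algebraic structure, which channels the analysis into a small number of algebraic equations in $\theta$.

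I would begin by recording the explicit form of $E$: for $n\geq 3$ one has $E_{\gamma\beta}(x) = c_1\,\delta_{\gamma\beta}\,|x|^{2-n} + c_2\,x_\gamma x_\beta\,|x|^{-n}$ for constants $c_1,c_2$ depending on $\mu,\lambda,n$. The ratio $c_2/c_1$ is pinned down by imposing the distributional identity $LE=I\,\delta_0$: computing $\mu\Delta E+(\lambda+\mu)\nabla\,\mathrm{div}\,E$ on this ansatz and demanding that the outcome vanish on $\mathbb{R}^n\setminus\{0\}$ forces the relation
\[
(3\mu+\lambda)\,c_2 = (n-2)(\lambda+\mu)\,c_1.
\]

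Next, substituting the coefficient tensor \eqref{Lame-1} into the left-hand side of the candidate identity yields
\[
a^{\beta\alpha}_{rs}(\theta)\,\partial_r E_{\gamma\beta} = \mu\,\partial_s E_{\gamma\alpha} + (\lambda+\mu-\theta)\,\delta_{s\alpha}\,\partial_\beta E_{\gamma\beta} + \theta\,\partial_\alpha E_{\gamma s}.
\]
Expanding each summand using the explicit form of $E$ and organizing by tensor structure, the result splits into pieces that are manifestly of the form $x_s\cdot(\text{scalar in }|x|)$—namely those proportional to $x_s\,\delta_{\gamma\alpha}/|x|^n$ and $x_\gamma x_\alpha x_s/|x|^{n+2}$—together with two obstruction pieces proportional to $\delta_{s\gamma} x_\alpha/|x|^n$ and $\delta_{s\alpha} x_\gamma/|x|^n$, whose respective scalar coefficients are
\[
\mu c_2-\theta(n-2)c_1\quad\text{and}\quad(\lambda+2\mu)c_2-(\lambda+\mu-\theta)(n-2)c_1.
\]
For the desired factorization to hold, both obstruction coefficients must vanish. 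Adding these two equations produces exactly $(3\mu+\lambda)c_2=(n-2)(\lambda+\mu)c_1$, which is the constraint already supplied by the fundamental-solution step; hence the two obstruction conditions are compatible, and each determines the same value $\theta=\mu c_2/((n-2)c_1)=\mu(\lambda+\mu)/(3\mu+\lambda)$. Direct substitution of this $\theta$ into \eqref{Lame-1} and elementary algebra then reproduces \eqref{BUIg-17XX}.

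The main obstacle is purely computational bookkeeping: correctly tracking the Kronecker contractions $\delta_{s\gamma}$, $\delta_{s\alpha}$, $\delta_{\gamma\alpha}$ that arise when differentiating $E$, so that one can cleanly isolate the obstruction terms. For $n=2$ the fundamental solution acquires an additional logarithmic piece (cf.\ Theorem~\ref{FS-prop}(c)), and one would separately verify the angular cancellation \eqref{Ea4-fCii-n2B}; this can be handled either by directly checking the symbol-based conditions \eqref{Ea4-fCii-n3}--\eqref{Ea4-fCii-n2B} (whose form is dimension-independent and mirrors the computation above) or by a parity/telescoping argument on $S^1$ analogous to that used for the scalar case in Remark~\ref{Tavav-yag8}.
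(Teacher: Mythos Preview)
The paper does not actually prove this lemma in the body of the text; it is stated with citations to \cite{MaMiMi} and \cite{MaMiMiMi}, and the only indication of method is the sentence immediately following the lemma, which says that for the coefficient tensor \eqref{BUIg-17XX} ``a straightforward calculation using the expression of the fundamental solution \ldots\ proves that \eqref{yfg-yLLL} is satisfied.'' Your approach is precisely this: you work with the equivalent characterization in Proposition~\ref{HGfs59+E}(b), insert the explicit Kelvin--Somigliana fundamental solution, and isolate the two obstruction coefficients whose vanishing singles out $\theta=\mu(\lambda+\mu)/(3\mu+\lambda)$. The computation you outline is correct (I checked the contraction $a^{\beta\alpha}_{rs}(\theta)\partial_r E_{\gamma\beta}$ and the two obstruction coefficients you record are right, and their sum does reproduce the fundamental-solution constraint), so for $n\geq 3$ your argument is complete and matches what the paper points to.

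The only soft spot is the $n=2$ case. You correctly flag that the logarithmic piece in $E$ and the extra integral condition \eqref{Ea4-fCii-n2B} need separate attention, but you stop at a sketch. Since condition \eqref{Ea4-fCii-n3} is formulated purely at the symbol level and is dimension-independent, the cleanest way to close this gap is to redo the obstruction analysis directly on $S_{\gamma\beta}(\xi)=\bigl[{\rm Symb}_L(\xi)\bigr]^{-1}_{\gamma\beta}$ (which for Lam\'e is $-\mu^{-1}|\xi|^{-2}\delta_{\gamma\beta}+(\lambda+\mu)\mu^{-1}(\lambda+2\mu)^{-1}|\xi|^{-4}\xi_\gamma\xi_\beta$); the same tensor bookkeeping then yields the identical value of $\theta$ without ever touching the logarithm, and the angular condition \eqref{Ea4-fCii-n2B} can be verified by the odd-integrand observation you allude to.
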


In turn, for the choice of coefficient tensor as in \eqref{BUIg-17XX},
a straightforward calculation using the expression of the fundamental solution that can be found in e.g. \cite{DM} proves that \eqref{yfg-yLLL} is satisfied if
we consider, for every $\alpha,\beta\in\{1,...,n\}$,
\begin{equation}\label{Lame-9RR.Pf}
k_{\alpha\beta}(x):=\frac{2\mu}{3\mu+\lambda}
\frac{\delta_{\alpha\beta}}{\omega_{n-1}}\frac{1}{|x|^{n}}
+\frac{\mu+\lambda}{3\mu+\lambda}\frac{n}{\omega_{n-1}}
\frac{x_\alpha x_\beta}{|x|^{n+2}},\qquad x\in{\mathbb{R}}^{n}\setminus\{0\}.
\end{equation}
Based on this and \eqref{Yagav-g45},
we obtain that the unique Poisson kernel for the Lam\'e system \eqref{TYd-YG-76g}
is the matrix-valued function $P=(P_{\alpha\beta})_{1\leq\alpha,\beta\leq n}:
{\mathbb{R}}^{n-1}\to{\mathbb{R}}^{n\times n}$ whose entries are given
for each $\alpha,\beta\in\{1,...,n\}$ and $x'\in{\mathbb{R}}^{n-1}$ by
\begin{equation}\label{Lame-9RR.L}
P_{\alpha\beta}(x')=\frac{4\mu}{3\mu+\lambda}
\frac{\delta_{\alpha\beta}}{\omega_{n-1}}\frac{1}{(|x'|^2+1)^{\frac{n}{2}}}
+\frac{\mu+\lambda}{3\mu+\lambda}\frac{2n}{\omega_{n-1}}
\frac{(x',1)_\alpha(x',1)_\beta}{(|x'|^2+1)^{\frac{n+2}{2}}},
\end{equation}
In concert with Theorem~\ref{them:Dir-l}, this analysis allows us to formulate
the following well-posedness result for the $\ell$-th order Dirichlet problem
for the Lam\'e system in the upper-half space.

\begin{theorem}\label{tuig-333}
Assume that the Lam\'e moduli $\lambda,\mu$ satisfy \eqref{Yfhv-8yg}. Then for every
$p\in(1,\infty)$, and for each $\ell\in\mathbb{N}_0$, the $\ell$-th order
Dirichlet boundary value problem \eqref{Dir-BVP-pLL:l} is well-posed for
the Lam\'e system \eqref{TYd-YG-76g}. In addition, the solution
$u=(u_\alpha)_{1\leq\alpha\leq n}$ corresponding to the boundary datum
$f=(f_\beta)_{1\leq\beta\leq n}\in L^p_\ell({\mathbb{R}}^{n-1})$ is given by
\begin{align}\label{kanb-u5yuh}
u_\alpha(x',t)=&\,\frac{4\mu}{3\mu+\lambda}
\frac{1}{\omega_{n-1}}\int_{{\mathbb{R}}^{n-1}}
\frac{t}{(|x'-y'|^2+t^2)^{\frac{n}{2}}}\,f_\alpha(y')\,dy'
\nonumber\\[4pt]
&\,+\frac{\mu+\lambda}{3\mu+\lambda}\frac{2n}{\omega_{n-1}}\int_{{\mathbb{R}}^{n-1}}
\frac{t\,(x'-y',t)_\alpha(x'-y',t)_\beta}{(|x'-y'|^2+t^2)^{\frac{n+2}{2}}}\,
f_\beta(y')\,dy',
\end{align}
at each point $(x',t)\in{\mathbb{R}}^n_{+}$, and satisfies \eqref{Dir-BVP-p2F.ii:l}.
\end{theorem}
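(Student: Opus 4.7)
The plan is to obtain Theorem~\ref{tuig-333} as a direct application of the master result Theorem~\ref{them:Dir-l}. Almost all of the analytic work has already been packaged into Section~\ref{S-6}, so what remains is a verification exercise plus a short scaling computation.

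First I would confirm the two hypotheses of Theorem~\ref{them:Dir-l} in the Lamé setting. Ellipticity in the Legendre--Hadamard sense \eqref{L-ell.X} was noted immediately after \eqref{Yfhv-8yg}; contracting any of the tensors $A(\theta)$ from \eqref{Lame-1} with $\xi_r\xi_s\eta_\alpha\overline{\eta_\beta}$ yields the $\theta$-independent quadratic form $\mu|\xi|^2|\eta|^2 + (\lambda+\mu)|\xi\cdot\eta|^2$, which is bounded below by $\min(\mu,2\mu+\lambda)|\xi|^2|\eta|^2$ thanks to \eqref{Yfhv-8yg}. The condition $\mathfrak{A}_L^{dis}\neq\emptyset$ is furnished by Lemma~\ref{gEJ}, which singles out the distinguished coefficient tensor $A(\theta)$ at $\theta=\mu(\lambda+\mu)/(3\mu+\lambda)$ with explicit entries \eqref{BUIg-17XX}.

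With these two items in hand, Theorem~\ref{them:Dir-l} immediately yields the unique solution $u(x',t)=(P_t\ast f)(x')$ of \eqref{Dir-BVP-pLL:l} together with the estimate \eqref{Dir-BVP-p2F.ii:l}, where $P$ is the Poisson kernel of Theorem~\ref{ya-T4-fav}. To identify $P$ explicitly, I would combine Proposition~\ref{uniq:double->!Poisson} (giving $P(x')=2k(x',1)$) with Proposition~\ref{HGfs59+E}(b) (giving the defining identity \eqref{yfg-yLLL} for $k$), and verify via the Kelvin-matrix fundamental solution for the Lamé system recorded in \cite{DM} that $k$ is indeed as in \eqref{Lame-9RR.Pf}. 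Evaluating at $(x',1)$ then produces the two-term kernel \eqref{Lame-9RR.L}.

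The last task is to unpack the abstract convolution $(P_t\ast f)(x')$ as the sum of integrals in \eqref{kanb-u5yuh}. This is nothing more than rescaling: using $(P_t)_{\alpha\beta}(x')=t^{1-n}P_{\alpha\beta}(x'/t)$ together with the identity $(x'/t,1)=t^{-1}(x',t)$, the two summands of $P$ in \eqref{Lame-9RR.L} produce the factors $t\,\delta_{\alpha\beta}/(|x'|^2+t^2)^{n/2}$ and $t(x',t)_\alpha(x',t)_\beta/(|x'|^2+t^2)^{(n+2)/2}$ respectively, with constants read off \eqref{Lame-9RR.L}; convolving against $f_\beta(y')$ at $x'-y'$ and summing over $\beta$ gives \eqref{kanb-u5yuh}. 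There is no genuine analytic obstacle here, since all heavy lifting is contained in Section~\ref{S-6} and the Lamé case is presented precisely because it fits the distinguished-tensor framework; the only place where one must be careful is the index bookkeeping in checking that \eqref{Lame-9RR.Pf} satisfies \eqref{yfg-yLLL} for the coefficient tensor \eqref{BUIg-17XX}, but this is purely algebraic.
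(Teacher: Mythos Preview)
Your proposal is correct and follows essentially the same route as the paper: the discussion preceding Theorem~\ref{tuig-333} in Section~\ref{SSe-5.2} verifies Legendre--Hadamard ellipticity from \eqref{Yfhv-8yg}, invokes Lemma~\ref{gEJ} to secure a distinguished coefficient tensor, computes $k$ from the Kelvin fundamental solution to obtain \eqref{Lame-9RR.Pf} and hence \eqref{Lame-9RR.L} via \eqref{Yagav-g45}, and then appeals to Theorem~\ref{them:Dir-l}. Your added details (the explicit $\theta$-independent quadratic form and the rescaling that turns $P_t\ast f$ into \eqref{kanb-u5yuh}) are accurate and simply flesh out steps the paper leaves implicit.
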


\end{document}